\renewcommand{\phi}{\varphi}
\DeclareMathOperator{\Tr}{Tr}
\newcommand{\C}{{\mathbb C}}
\newcommand{\F}{{\mathbb F}}
\newcommand{\Q}{{\mathbb Q}}
\newcommand{\Z}{{\mathbb Z}}
\newcommand{\card}[1]{\left|{#1}\right|}
\newcommand{\conj}[1]{\overline{#1}}
\newcommand{\sums}[1]{\sum_{\substack{#1}}}
\newcommand{\val}{v_p}
\newcommand{\ft}[1]{\widehat{#1}}
\newcommand{\Fp}{\F_p}
\newcommand{\Fu}{F^*}
\newcommand{\mchars}{\ft{\Fu}}
\newcommand{\gr}{\C[\Fu]}
\newcommand{\fr}{\C^{\mchars}}
\newcommand{\Wfd}{W_{F,d}}
\newcommand{\Wfdu}{W_{F,d}(u)}
\newcommand{\Wu}{W_u}
\newcommand{\Wc}{\conj{W}}
\newcommand{\Psip}[1]{\Psi^{(#1)}}
\newtheorem{theorem}{Theorem}[section]
\newtheorem{proposition}[theorem]{Proposition}
\newtheorem{lemma}[theorem]{Lemma}
\newtheorem{corollary}[theorem]{Corollary}
\newtheorem{conjecture}[theorem]{Conjecture}
\theoremstyle{remark}
\newtheorem{remark}[theorem]{Remark}
\title{Divisibility of Weil Sums of Binomials}
\author{Daniel J. Katz}
\address{Department of Mathematics, California State University, Northridge, \: United States}
\date{original version: 29 July 2014; this version: 16 March 2015}
\begin{document}
\begin{abstract}
Consider the Weil sum $W_{F,d}(u)=\sum_{x \in F} \psi(x^d+u x)$, where $F$ is a finite field of characteristic $p$, $\psi$ is the canonical additive character of $F$, $d$ is coprime to $|F^*|$, and $u \in F^*$.
We say that $W_{F,d}(u)$ is three-valued when it assumes precisely three distinct values as $u$ runs through $F^*$: this is the minimum number of distinct values in the nondegenerate case, and three-valued $W_{F,d}$ are rare and desirable.
When $W_{F,d}$ is three-valued, we give a lower bound on the $p$-adic valuation of the values.
This enables us to prove the characteristic $3$ case of a 1976 conjecture of Helleseth: when $p=3$ and $[F:{\mathbb F}_3]$ is a power of $2$, we show that $W_{F,d}$ cannot be three-valued.
\end{abstract}
\maketitle
\section{Introduction}
In this paper, we are concerned with Weil sums of binomials of the form
\begin{equation}\label{Lawrence}
\Wfdu=\sum_{x \in F} \psi(x^d + u x),
\end{equation}
where $F$ is a finite field of characteristic $p$, the exponent $d$ is a positive integer such that $\gcd(d,\card{\Fu})=1$, the coefficient $u$ is in $\Fu$, and $\psi \colon F \to \C$ is the canonical additive character $\psi(x)=e^{2\pi i \Tr_{F/\Fp}(x)/p}$.
Nontrivial Weil sums of form 
\[
\sum_{x \in F} \psi(a x^m+b x^n),
\]
with $\gcd(m,\card{\Fu})=\gcd(n,\card{\Fu})=1$ can be reparameterized to the form \eqref{Lawrence}.
Such sums and their relatives arise often in number-theory \cite{Kloosterman,Vinogradow,Davenport-Heilbronn,Karatsuba,Carlitz-1978,Carlitz-1979,Lachaud-Wolfmann,Katz-Livne,Coulter,Cochrane-Pinner-2003,Cochrane-Pinner-2011}, and in applications to finite geometry, digital sequence design, error-correcting codes, and cryptography.
See \cite[Appendix]{Katz} on the various guises in which these sums appear in these applications, and for a bibliography.

We fix $F$ and $d$, and consider the values $\Wfdu$ attains as $u$ varies over $\Fu$, but typically ignore the trivial $\Wfd(0)=0$, which is the Weil sum of a monomial.
We say that $\Wfd$ is {\it $v$-valued} to mean that $\card{\{\Wfdu : u \in \Fu\}}=v$.

If $F$ is of characteristic $p$ and $d$ is a power of $p$ modulo $\card{\Fu}$, then $\psi(x^d)=\psi(x)$, so $\Wfdu$ effectively becomes the Weil sum of the monomial $(1+u)x$, so that
\begin{equation}\label{Ronald}
\Wfdu = \begin{cases}
\card{F} & \text{if $u=-1$,} \\
0 & \text{otherwise},
\end{cases}
\end{equation}
and in this case we say that $d$ is {\it degenerate over $F$}.  For nondegenerate $d$, Helleseth \cite[Theorem 4.1]{Helleseth} showed that one obtains more than two values.
\begin{theorem}[Helleseth, 1976]
If $d$ is nondegenerate over $F$, then $\Wfd$ is at least three-valued.
\end{theorem}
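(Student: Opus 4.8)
The plan is to argue by contradiction: assuming $\Wfd$ takes at most two values on $\Fu$, I will show that $d$ must be degenerate. Throughout write $q=\card{F}$, so $\card{\Fu}=q-1$. The first tools are the two lowest power moments. Using orthogonality of additive characters together with the hypothesis $\gcd(d,q-1)=1$ (which makes $x\mapsto x^d$ a bijection of $F$, so that $\Wfd(0)=\sum_x\psi(x)=0$), I would establish
\[
\sum_{u\in\Fu}\Wfdu=q,\qquad \sum_{u\in\Fu}\Wfdu^2=q^2 .
\]
For the second identity the diagonal forces $\sum_u\psi(u(x+y))$ to vanish unless $y=-x$, and then $x^d+(-x)^d=0$ (in characteristic $2$ because $-x=x$ and $2x^d=0$, and in odd characteristic because $\gcd(d,q-1)=1$ forces $d$ odd), leaving $q\cdot q$.

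I would first dispose of the one-valued case: a single value $a$ would force $a(q-1)=q$, impossible for an algebraic integer when $q>2$. So suppose $\Wfd$ takes exactly two values $a\neq b$ with multiplicities $n_a,n_b\ge 1$ and $n_a+n_b=q-1$. The two moments give $an_a+bn_b=q$ and $a^2n_a+b^2n_b=q^2$, which after elimination are equivalent to
\[
(a-b)^2 n_a n_b=q^2(q-2)\qquad\text{together with}\qquad an_a+bn_b=q .
\]
I would also record that every $\Wfdu$ is real (replace $x$ by $-x$) and lies in $[-q,q]$, and that reducing modulo $1-\zeta_p$ gives $\Wfdu\equiv q\equiv 0$, so each integer value is divisible by $p$.

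Next I would invoke a Galois symmetry to organize the two-value analysis. For $t\in\Fp^*$ the automorphism $\zeta_p\mapsto\zeta_p^t$ sends $\Wfdu$ to $\Wfd(t^{(d-1)/d}u)$ (substitute $x=c^{-1}y$ with $c^d=t$ to absorb $t$ into the monomial, legitimate since $\gcd(d,q-1)=1$); as $u\mapsto t^{(d-1)/d}u$ permutes $\Fu$, the multiset of values is Galois-stable, so the two values are either both rational or a conjugate pair of equal multiplicity. In the conjugate case $n_a=n_b=(q-1)/2$, and the first moment forces $a+b=2q/(q-1)\in\Z$, impossible for $q>3$ (while $q=3$ collapses to the rational case). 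In the rational case $a,b\in\Z$ with $p\mid a$ and $p\mid b$, and I would eliminate every pair except $\{a,b\}=\{0,q\}$ (with multiplicities $q-2$ and $1$) by combining the two displayed relations with the positivity and integrality of $n_a,n_b$; for instance $\{a,b\}=\{2,-q\}$ satisfies the difference relation but yields the non-integral $n_a=q^2/(q+2)$. This finite Diophantine elimination is the step I expect to be the main obstacle, particularly when $\card{F}$ is not prime, where the congruence $p\mid a,b$ and the bounds $\card{a},\card{b}\le q$ must be combined carefully to exclude the remaining value-pairs.

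Finally, once $\{a,b\}=\{0,q\}$, some $u_0\in\Fu$ satisfies $\Wfd(u_0)=q$. As this is a sum of $q$ complex numbers of modulus $1$, equality in the triangle inequality forces $\psi(x^d+u_0x)=1$, i.e.\ $\Tr(x^d)=\Tr(-u_0x)$ for all $x\in F$. The right-hand side is a nonzero $\Fp$-linear function of $x$, so $\Tr(x^d)$ is linearized; comparing the reduced-exponent monomial expansions of the two sides shows that $d$ must be congruent to a power of $p$ modulo $q-1$. That is precisely the statement that $d$ is degenerate over $F$, contradicting the hypothesis and completing the proof.
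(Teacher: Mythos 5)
There is nothing in the paper to compare against here: the statement is imported from Helleseth's 1976 paper as [Theorem 4.1] and never proved internally (the paper only rederives the first two power moments you use, in group-algebra form, as Corollary \ref{Orestes}.\ref{Katherine}--\ref{Laura}). So your proposal must stand on its own, and it essentially does: the skeleton (moments, reality, $p$-divisibility via $1-\zeta_p$, Galois dichotomy, Diophantine elimination, triangle-equality endgame) is the classical route and every step is sound. The one step you flag as the main obstacle in fact closes, and more cleanly than you fear. From $\sum_{u\in\Fu}(W_u-a)(W_u-b)=0$ and your two moments, $q^2-(a+b)q+ab(q-1)=0$; reducing modulo $q-1$ gives $a+b\equiv 1\pmod{q-1}$, and $\gcd(q,q-1)=1$ gives $q\mid ab$. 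Write $a+b=1+k(q-1)$, so $ab=q(k-1)$. If $\card{a}=q$ then the second moment forces $n_a=1$ and $b=0$, the desired pattern; otherwise $\card{a+b}<2q$, so $k\in\{-2,-1,0,1,2\}$. Since $p\mid a$ and $p\mid b$ while $q-1\equiv-1\pmod p$, we get $k\equiv 1\pmod p$, which kills $k=0$ and $k=2$ outright and leaves only: $k=1$, giving $ab=0$, $a+b=q$, i.e.\ $\{a,b\}=\{0,q\}$; $k=-1$, possible only for $p=2$, where $a,b$ are the roots of $X^2-(2-q)X-2q=(X-2)(X+q)$ --- exactly your example, excluded since $n_a=q^2/(q+2)\in\Z$ would force $(q+2)\mid 4$; and $k=-2$, possible only for $p=3$, where integrality of $a,b$ needs $(3-2q)^2+12q=4q^2+9$ to be a perfect square, impossible for $q>2$ since it lies strictly between $(2q)^2$ and $(2q+1)^2$.

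Two details worth writing out in the remaining steps. In the endgame, after $\Tr(x^d)=\Tr(-u_0x)$ for all $x$, you compare the reductions modulo $x^q-x$: note that the $n$ exponents $dp^i\bmod(q-1)$, $0\le i<n$, are pairwise distinct (if $dp^i\equiv dp^j$ then $p^{j-i}\equiv 1\pmod{q-1}$ with $0<j-i<n$, impossible), so no coefficient cancellation can occur on the left, and matching monomials against the $n$ distinct terms $(-u_0)^{p^i}x^{p^i}$ forces $d\equiv p^j\pmod{q-1}$ (and $u_0=-1$), which is exactly degeneracy. In the Galois step, the point that a swapping automorphism exists whenever $a\notin\Q$ (the stabilizer of $a$ has index $2$ since the orbit lies in $\{a,b\}$) should be said explicitly, as it is what yields $n_a=n_b$ and hence $a+b=2q/(q-1)$. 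Finally, the residual cases $q=2,3$ left over from your one-valued and conjugate-pair analyses are harmless because every $d$ with $\gcd(d,q-1)=1$ is degenerate over $\F_2$ and $\F_3$, so the conclusion of the theorem is vacuously true there; with these points added, your proposal is a complete proof.
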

Much interest has focused on which choices of $F$ and $d$ make $\Wfd$ precisely three-valued, and ten infinite families have been found (see \cite[Table 1]{Aubry-Katz-Langevin}).
From these, one finds that if $F$ is of characteristic $p$, and if $[F:\Fp]$ is divisible by an odd prime, then there is a $d$ such $\Wfd$ is three-valued.
However, no three-valued examples have ever been found for fields $F$ where $[F:\Fp]$ is a power of $2$.
This prompted the following conjecture \cite[Conjecture 5.2]{Helleseth}.
\begin{conjecture}[Helleseth, 1976]\label{Herman}
If $F$ is of characteristic $p$ with $[F:\Fp]$ a power of $2$, then $\Wfd$ is not three-valued.
\end{conjecture}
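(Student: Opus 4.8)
The plan is to combine the power-moment identities satisfied by any three-valued $\Wfd$ with a Stickelberger-type lower bound on the $p$-adic valuation of its values, and then to extract a counting contradiction that is available precisely when $[F:\Fp]$ is a power of $2$; throughout set $n=[F:\Fp]$ and $q=\card{F}=p^n$. The first step reduces everything to an arithmetic problem about three rational integers. Assume $\Wfd$ is three-valued, hence nondegenerate by Helleseth's theorem. Each value lies in $\Z[\zeta_p]$ and is real, since $x \mapsto -x$ gives $\conj{\Wfd(u)}=\Wfd(u)$; and for $t \in \Fp^*$, writing $d^{-1}$ for the inverse of $d$ modulo $\card{\Fu}$, the substitution $x \mapsto t^{-d^{-1}}x$ shows that the automorphism $\zeta_p \mapsto \zeta_p^t$ sends $\Wfd(u)$ to $\Wfd(t^{(d-1)d^{-1}}u)$. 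Thus the Galois group of $\Q(\zeta_p)$ permutes the fibers of $\Wfd$ with equal sizes, so conjugate values share a multiplicity; together with the moment identities below this forces each value to be Galois-fixed, so the three values $w_0,w_1,w_2$ are rational integers. Writing $m_i=\card{\{u \in \Fu : \Wfd(u)=w_i\}}$ and summing the inner character sum over $u \in \Fu$ after opening $\Wfd(u)$ and $\Wfd(u)^2$ gives
\[
\sum_i m_i = q-1, \qquad \sum_i m_i w_i = q, \qquad \sum_i m_i w_i^2 = q^2 ,
\]
which constrain the configuration to a low-dimensional family.

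Next I would prove the divisibility. Expanding $\Wfd(u)$ over the multiplicative characters of $\Fu$ writes it as a combination of products of Gauss sums $g(\chi)$, and Stickelberger's theorem gives $\val(g(\omega^{-a}))=s(a)/(p-1)$, where $\omega$ is the Teichm\"uller character and $s(a)$ is the base-$p$ digit sum of $a$. Isolating the terms of least valuation and showing that the three-valued hypothesis prevents their leading coefficients from all cancelling, I would obtain a lower bound $\val(w_i) \ge \lceil n/2 \rceil$ for each value. The role of three-valuedness is essential here: were some value to have small valuation, reducing the moment system modulo a small power of $p$ would force the underlying character sum to separate the $u \in \Fu$ into more than three classes, contradicting the hypothesis.

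The contradiction would then come from specializing to $n=2^k$. Because $n$ is even, $\lceil n/2\rceil = n/2$, so I may write $w_i = p^{n/2}a_i$ with $a_i \in \Z$; the moment identities become $\sum_i m_i a_i = p^{n/2}$ and $\sum_i m_i a_i^2 = p^n = q$, together with $\sum_i m_i = q-1$. Eliminating the $a_i$ via the identity $\sum_{i<j} m_i m_j (a_i-a_j)^2 = q(q-2)$ and tracking $p$-adic valuations, the power-of-$2$ shape of $n$ makes the valuations of the surviving counting quantities rigid, and I would argue that no assignment of three distinct integers $a_i$ with positive multiplicities $m_i$ summing to $q-1$ can satisfy all three relations.

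The hard part will be making the second and third steps uniform in $p$; this is exactly why only the case $p=3$ is settled here. For $p=3$ the base-$p$ digits lie in $\{0,1,2\}$, so the Stickelberger valuations take very few values, the leading-coefficient cancellation analysis collapses to a short finite check, and the final modular count is rigid. For general $p$ the digit patterns proliferate, many Gauss-sum terms share the minimal valuation, and ruling out an accidental cancellation that lowers $\val(w_i)$---uniformly in $p$---is the genuine obstacle. I would attack it by refining Stickelberger to a Gross--Koblitz computation of the leading Teichm\"uller coefficient of each minimal-valuation term, aiming to show that these coefficients cannot sum to zero in the residue field unless $\Wfd$ collapses to two values.
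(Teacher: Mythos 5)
There is a genuine gap, and it is fatal to the architecture of your plan: you have no mechanism that gives an \emph{upper} bound on the $p$-adic valuation of some value, and your final counting step cannot supply one. The three moment identities you write down depend on $n$ only through $q=p^n$; they carry no trace of the multiplicative structure of $n$. Concretely, take values $0,\pm c$ with $c=p^{(n+2)/2}$ for $n$ even: the system $\sum_i m_i=q-1$, $\sum_i m_i w_i=q$, $\sum_i m_i w_i^2=q^2$ is solved by $m_{+}-m_{-}=q/c$, $m_{+}+m_{-}=q^2/c^2$, with positive integer multiplicities (e.g.\ $q=2^8$, $c=2^5$ gives $m_+=36$, $m_-=28$, $m_0=191$), and here $\val(c)=5>n/2=4$, so your lower bound is also satisfied. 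Indeed this is exactly the arithmetic profile of genuine Kasami-type examples at $n=6$, and the identities cannot distinguish $n=8$ from $n=6$. So ``no assignment of three distinct integers $a_i$ \dots can satisfy all three relations'' is false, and no amount of $p$-adic rigidity in the identity $\sum_{i<j}m_i m_j(a_i-a_j)^2=q(q-2)$ will rescue it. The power-of-$2$ hypothesis must enter through field-theoretic information invisible to moments. In the paper it enters via Theorem \ref{Dorothy}: since $d\equiv 1 \pmod{p-1}$ (Theorem \ref{Imogene}), $d$ is degenerate over $\Fp$ but not over $F$, and because the tower from $\Fp$ to $F$ consists solely of quadratic steps when $n$ is a power of $2$, some quadratic step $K\subset L$ witnesses the loss of degeneracy; the Aubry--Katz--Langevin descent (Propositions \ref{George} and \ref{Henry}) then pins $\min_{u}\val(\Wfdu)\leq [F:L][K:\Fp]=n/2$.

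A second, independent problem: even granting your Stickelberger analysis, the bound you aim for, $\val(w_i)\geq\lceil n/2\rceil=n/2$, is not strict, and a non-strict lower bound of $n/2$ is consistent with the upper bound $\leq n/2$ --- no contradiction results. The paper needs, and proves, the strict inequality $\val(a),\val(b)>n/2$ for $p=2,3$ (Theorem \ref{Celine}), and it does so by a route quite different from Gauss-sum valuations: the group-algebra identities $X=WV$ and $W\conj{W}=q^2$ yield the divisibility $a_o b_o(a-b)_o\mid V_u$ (Lemma \ref{Zachary}), whence $V_u^2\geq a_o b_o (a-b)_o V_u$ and, via the fourth-moment positivity of Lemma \ref{Wilbur}, the inequality $a_p b_p\geq q(a-b)_o$. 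The borderline case $\val(a)=\val(b)=n/2$ is then excluded for $p=2,3$ not by digit-sum arguments but by the congruence $V_1\equiv 0\pmod p$, obtained from McGuire's $S_3$-action on the roots of $X^d+(1-X)^d-1$ (Proposition \ref{Barbara}), combined with $p\mid a,b$ (Theorem \ref{Gloria}) to force $\val(ab)>\val(q)$ in Lemma \ref{Victor}. Your instinct that the borderline cancellation analysis is the obstacle for $p\geq 5$ is correct --- that case remains open precisely because the analogue of this strictness fails there --- but for $p=3$ the paper's resolution is this elementary orbit-counting congruence, not a Gross--Koblitz computation; and in any case your outline still lacks the upper-bound half of the argument.
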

Many attempts have been made to test or prove this conjecture in the case where $F$ is of characteristic $2$, and many fruitful discoveries were made in the process \cite{Games,Calderbank-McGuire-Poonen-Rubinstein,McGuire,Charpin,CakCak-Langevin,Feng}.
The $p=2$ case was at last proved in \cite[Corollary 1.10]{Katz}.
Now the $p=3$ case is proved in this paper as a corollary of a new bound on the $p$-divisibility of Weil sums.

For a nonzero integer $n$, the {\it $p$-adic valuation} of $n$, written $\val(n)$, is the largest $k$ such that $p^k \mid n$, and $\val(0)=\infty$.
If we extend this valuation to the cyclotomic field $\Q(e^{2\pi i/p})$ where the Weil sums lie, we may state the first fundamental result on the $p$-divisibility of Weil sums from \cite[Theorem 4.5]{Helleseth}.
\begin{theorem}[Helleseth, 1976]\label{Gloria}
If $F$ is of characteristic $p$, then we have $\val(\Wfdu) > 0$ for every $u \in F$.
\end{theorem}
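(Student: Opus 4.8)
The plan is to reduce the Weil sum modulo the prime above $p$ and to observe that every character value then collapses to $1$. First I would record that $\Wfdu$ lies in the ring of integers $\Z[\zeta]$ of the cyclotomic field $\Q(\zeta)$, where $\zeta=e^{2\pi i/p}$, since each summand $\psi(x^d+ux)=\zeta^{\Tr_{F/\Fp}(x^d+ux)}$ is a power of $\zeta$. In this field the rational prime $p$ is totally ramified, with a single prime ideal $\mathfrak{p}=(1-\zeta)$ lying above it, and the condition $\val(\Wfdu)>0$ is precisely the statement that $\Wfdu$ belongs to $\mathfrak{p}$ (the element is an algebraic integer, so its valuation is nonnegative, and it is positive exactly when the element lies in the maximal ideal).

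Next I would pass to the residue field $\Z[\zeta]/\mathfrak{p}\cong\Fp$. Since $1-\zeta\in\mathfrak{p}$, we have $\zeta\equiv 1\pmod{\mathfrak{p}}$, and hence every power of $\zeta$ is congruent to $1$; in particular $\psi(x^d+ux)\equiv 1\pmod{\mathfrak{p}}$ for each $x\in F$, regardless of the value of the trace in the exponent. Summing over $x$ then gives
\[
\Wfdu=\sum_{x\in F}\psi(x^d+ux)\equiv\sum_{x\in F}1=\card{F}\pmod{\mathfrak{p}}.
\]

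Finally, since $\card{F}=p^{[F:\Fp]}$ is a power of $p$ and $p\in\mathfrak{p}$, the right-hand side vanishes modulo $\mathfrak{p}$, so $\Wfdu\in\mathfrak{p}$ and therefore $\val(\Wfdu)>0$. The same computation also covers the value $u=0$, and nothing in it uses the coprimality of $d$ with $\card{\Fu}$. I do not expect a genuine obstacle here beyond setting up the arithmetic of $\mathfrak{p}$ correctly: the crux is simply that reduction modulo the ramified prime sends every $p$-th root of unity to $1$, converting the character sum into a count of the $p^{[F:\Fp]}$ elements of $F$.
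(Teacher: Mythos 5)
Your proof is correct and complete. There is, however, nothing in the paper to compare it against: Theorem \ref{Gloria} is quoted from Helleseth's 1976 paper (his Theorem 4.5) without proof, so the reduction you give modulo the totally ramified prime $\mathfrak{p}=(1-\zeta)$ stands on its own, and it is the standard modern argument for this fact. All the steps check out: $\Wfdu\in\Z[\zeta]$, $\zeta\equiv 1\pmod{\mathfrak{p}}$ forces $\Wfdu\equiv\card{F}\equiv 0\pmod{\mathfrak{p}}$, and for an algebraic integer, positivity of $\val$ is equivalent to membership in $\mathfrak{p}$; moreover, the uniqueness of the prime above $p$ means the extension of $\val$ to $\Q(\zeta)$ is unambiguous, which quietly justifies the paper's phrase ``extend this valuation.'' Your closing observations are also accurate --- the argument uses neither $u\neq 0$ nor $\gcd(d,\card{\Fu})=1$ --- and in fact it yields the slightly sharper quantitative bound $\val(\Wfdu)\geq 1/(p-1)$, since $\val(1-\zeta)=1/(p-1)$ under the normalization $\val(p)=1$.
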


It was recently proved \cite[Theorems 1.7, 1.9]{Katz} that when $\Wfd$ is three-valued, the values must be rational integers.
\begin{theorem}[Katz, 2012]\label{Imogene}
If $F$ is of characteristic $p$, and if $\Wfd$ is three-valued, then the three values are in $\Z$, one of the values is $0$, and $d \equiv 1 \pmod{p-1}$.
\end{theorem}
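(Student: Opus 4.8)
\emph{Galois action.} Writing $\zeta = e^{2\pi i/p}$, all the sums lie in $\Q(\zeta)$, whose Galois group over $\Q$ is $\{\sigma_t : t \in \Fp^*\}$ with $\sigma_t(\zeta)=\zeta^t$. The plan is to first record how this group moves the values. Since $\psi(a)=\zeta^{\Tr(a)}$ gives $\sigma_t(\psi(a))=\psi(ta)$, I would compute
\[
\sigma_t(\Wfdu)=\sum_{x \in F}\psi(tx^d+tux),
\]
and then, using that $\gcd(d,\card{\Fu})=1$ makes $x \mapsto x^d$ a permutation of $F$, substitute $x \mapsto t^{-d'}x$, where $dd'\equiv 1 \pmod{\card{\Fu}}$ and $t^{\card{\Fu}}=1$ (as $p-1$ divides $\card{\Fu}$). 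This absorbs the stray factor $t$ and gives the clean relation $\sigma_t(\Wfdu)=\Wfd(t^{1-d'}u)$. Two consequences will drive everything: the three-element value set is stable under $\mathrm{Gal}(\Q(\zeta)/\Q)$, and because $u \mapsto t^{1-d'}u$ permutes $\Fu$, Galois-conjugate values occur equally often.

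\emph{Rationality of the values.} The three distinct values are algebraic integers that Galois permutes, so their orbits form either three singletons, a singleton and a conjugate pair, or a conjugate triple (for $p=2$ the group is trivial and rationality is immediate). To rule out the irrational configurations I would use two moment identities coming from orthogonality of $\psi$: interchanging summation order in $\sum_{u \in F}\Wfdu$ and $\sum_{u \in F}\card{\Wfdu}^2$ yields $\sum_{u \in \Fu}\Wfdu=\card{F}$ and $\sum_{u \in \Fu}\card{\Wfdu}^2=\card{F}^2$. For a conjugate triple, equal frequencies force each value to occur $(\card{F}-1)/3$ times, whereupon $\sum_{u}\Wfdu=\card{F}$ forces the values' sum to be $3\card{F}/(\card{F}-1)$, a non-integer as soon as $\card{F}>4$, and the tiny fields admit no nondegenerate three-valued $\Wfd$. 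For a conjugate pair I would note that $p$ odd forces $d$ odd, hence $\Wfdu \in \mathbb{R}$; the pair then lies in the unique quadratic subfield $\Q(\sqrt{p^{*}})$ of $\Q(\zeta)$, where $p^{*}=(-1)^{(p-1)/2}p$. This is imaginary when $p \equiv 3 \pmod 4$, contradicting reality at once; when $p \equiv 1 \pmod 4$ I would instead combine $\val(\Wfdu)>0$ (Theorem~\ref{Gloria}) with the Weil bound $\card{\Wfdu}\le (d-1)\card{F}^{1/2}$ to squeeze the pair's trace and norm into an impossible range. Thus all three values are rational integers.

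\emph{A zero value and the exponent.} With rationality secured, Theorem~\ref{Gloria} makes each value divisible by $p$; writing the three values as $A,B,C$, the moment identities become explicit relations among $A,B,C$ and their frequencies, and I would combine these with the $p$-divisibility to force the constant term of $(X-A)(X-B)(X-C)$ to vanish, i.e.\ that one value is $0$. For the congruence $d\equiv 1\pmod{p-1}$ I would abandon elementary symmetry and pass to the expansion of $\Wfdu$ in Gauss sums over the multiplicative characters of $F$, then apply Stickelberger's theorem, which reads the exact $p$-adic valuation of each Gauss sum off the base-$p$ digits of its index. Three-valuedness constrains these valuations so severely that the admissible digit patterns collapse to $d\equiv 1\pmod{p-1}$.

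\emph{Main obstacle.} The symmetry and moment steps are bookkeeping; the genuine difficulty is arithmetic. I expect the hard parts to be the real-quadratic case when $p\equiv 1\pmod 4$ and, especially, extracting $d\equiv 1\pmod{p-1}$: each needs sharp control of the $p$-adic valuations of the values through Gauss sums and Stickelberger, well beyond the single inequality $\val(\Wfdu)>0$, and that valuation analysis is where I anticipate the real work.
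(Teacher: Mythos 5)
This statement is not proved in the paper at all: it is imported verbatim from Katz (2012) with the citation [Theorems 1.7, 1.9], so there is no internal proof to compare against; your proposal must be judged on its own merits, against the cited work's methods. Your opening moves are sound and do match the real toolkit: the relation $\sigma_t(\Wfdu)=\Wfd(t^{1-d'}u)$ is correct (the substitution $x\mapsto t^{-d'}x$ works exactly as you say), conjugate values therefore occur with equal multiplicity, the full-orbit case dies on $A+B+C=3q/(q-1)\notin\Z$ for $q>4$, and the $p\equiv 3\pmod 4$ pair case dies on reality versus the imaginary quadratic subfield. But two of your steps have genuine gaps. First, in the $p\equiv 1\pmod 4$ pair case, the Weil bound $\card{\Wfdu}\le (d-1)\sqrt{q}$ is vacuous: $\Wfdu$ depends only on $d$ modulo $q-1$, and $d$ can be as large as $q-2$, so $(d-1)\sqrt{q}$ typically exceeds even the trivial bound $\card{\Wfdu}\le q$. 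No squeeze on the trace and norm of the conjugate pair can come from it; this case requires genuinely $p$-adic (valuation-theoretic) input rather than an Archimedean estimate, and as written the step fails.

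Second, your claim that the moment identities plus $p$-divisibility force one value to be $0$ is false with the ingredients you actually establish (the first and second moments and Theorem~\ref{Gloria}). Concretely, take $q=8$, $p=2$, and candidate values $2$, $4$, $-2$ with multiplicities $1$, $3$, $3$: then $1+3+3=7=q-1$, $2+12-6=8=q$, and $4+48+12=64=q^2$, with all three values nonzero and divisible by $p$ (and for $p=2$ the Galois constraint is empty). So no bookkeeping with those relations alone can make the constant term of $(X-A)(X-B)(X-C)$ vanish; one needs the higher power moments tied to the solution counts $V_u$ (the machinery of Section~\ref{Albert}, where already the third moment $q^2V_1$ rules out this configuration since it would force $V_1=11/4$) or the Gauss-sum valuation analysis. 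Finally, the Stickelberger argument for $d\equiv 1\pmod{p-1}$, which you correctly identify as the crux, is named but entirely unexecuted, and nothing in your sketch indicates how three-valuedness constrains the base-$p$ digit patterns. As it stands, the proposal correctly handles the easy rationality cases but is a plan, not a proof, for the two hard parts of the theorem.
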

So if $\Wfd$ is three-valued, these two theorems show that $p\mid \Wfdu$ for all $u \in F$.  
Our main result is a much stronger lower bound on the $p$-divisibility.
\begin{theorem}\label{Celine}
If $F$ is of characteristic $p$ and order $q=p^n$, and if $\Wfd$ is three-valued with values $0$, $a$, and $b$, then one of the following holds:
\begin{enumerate}[(i).]
\item $\val(a), \val(b) > n/2$; or
\item\label{Arthur} $\val(a)=\val(b)=n/2$, and $|a-b|$ is a power of $p$ with $|a-b| > \sqrt{q}$;
\end{enumerate}
and case \eqref{Arthur} cannot occur if $p=2$ or $3$.
\end{theorem}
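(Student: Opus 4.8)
The plan is to extract as much as possible from the power moments of $\Wfd$, and then to sharpen the resulting valuation estimates using the Gauss-sum expansion of the Weil sum together with Stickelberger's theorem. By Theorem~\ref{Imogene} the three values are integers $0$, $a$, $b$, and $x\mapsto x^d$ permutes $F$ so that $\Wfd(0)=0$. Writing $n_a$ and $n_b$ for the number of $u\in\Fu$ at which $\Wfdu$ equals $a$ and $b$ respectively, I would compute $\sum_{u\in F}\Wfdu$ and $\sum_{u\in F}\Wfdu^2$ by interchanging the order of summation and using the orthogonality of $\psi$ (the second moment collapses because $x^d+(-x)^d$ vanishes identically, by $d\equiv 1\pmod{p-1}$ when $p$ is odd and trivially when $p=2$). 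This gives $a n_a+b n_b=q$ and $a^2 n_a+b^2 n_b=q^2$, whence $n_a=q(q-b)/(a(a-b))$ and the mirror formula for $n_b$. Positivity of the multiplicities forces $a$ and $b$ to have opposite signs and $\card{a},\card{b}<q$, so in particular $\val(a),\val(b)\le n-1$.

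Next I would convert these into $p$-adic bookkeeping. Since $\card{b}<q=p^n$ we have $\val(q-b)=\val(b)$, and likewise $\val(q-a)=\val(a)$, so the integrality $\val(n_a),\val(n_b)\ge 0$ yields the exact identities $\val(n_a)=n+\val(b)-\val(a)-\val(a-b)$ and its mirror; the two inequalities $\val(n_a),\val(n_b)\ge 0$ then give $\val(a-b)\le n-\card{\val(a)-\val(b)}$. Moreover, reading the second-moment identity $p$-adically shows that $a^2 n_a$ and $b^2 n_b$ have the common valuation $V=n+\val(a)+\val(b)-\val(a-b)$, so that reaching $\val(q^2)=2n$ requires a cancellation: writing $a^2 n_a=p^{V}A$ and $b^2 n_b=p^{V}B$ with $p\nmid AB$, one gets $A+B=p^{2n-V}$, a genuine power of $p$.

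The crux is that these elementary relations are by themselves too weak: they admit spurious integer solutions such as $a=\sqrt q$, $b=-(q-\sqrt q)$, so the threshold $n/2$ cannot be forced by moments alone. Here I would expand $\Wfdu$ for $u\in\Fu$ as a combination of Gauss sums $\sum_\chi c_\chi\,G(\chi)$ by Fourier inversion on $\Fu$, and invoke Stickelberger's theorem, which evaluates $\val(G(\chi))$ as $(p-1)^{-1}$ times a sum of base-$p$ digits. The three-valued hypothesis, together with $d\equiv 1\pmod{p-1}$ from Theorem~\ref{Imogene}, should restrict the surviving characters enough that the digit-sum estimate forces $\min(\val(a),\val(b))\ge n/2$; equality can occur only when the contributing Gauss sums sit exactly at the $\sqrt q$ scale, and combining this with the cancellation relation $A+B=p^{2n-V}$ then pins $\card{a-b}$ to be a power of $p$ exceeding $\sqrt q$, which is case~\eqref{Arthur}. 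Making the Stickelberger digit-sum bound sharp, and tying the equality case precisely to the power-of-$p$ structure of $a-b$, is the step I expect to be the main obstacle.

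Finally, to exclude case~\eqref{Arthur} when $p\in\{2,3\}$, I would argue that the boundary valuation $\val(a)=\val(b)=n/2$ is never attained for these primes. Attaining it requires a contributing base-$p$ digit sum equal to $(p-1)n/2$, and for $p=2$ and $p=3$ a parity-and-size analysis of the admissible digit vectors (again using $d\equiv 1\pmod{p-1}$ and $\gcd(d,q-1)=1$) should show that no such configuration survives, so that $\val(\Wfdu)>n/2$ strictly and only case~(i) remains. I would cross-check this conclusion against the forced combinatorial solution $a=\sqrt q$, $b=-(q-\sqrt q)$ isolated above, confirming that it is indeed eliminated for $p=2,3$.
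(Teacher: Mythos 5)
Your first two paragraphs are sound but only reproduce the easy part of the argument: the first and second power moments, the opposite signs of $a$ and $b$, the bound $\card{a},\card{b}<q$, and some valuation bookkeeping. You correctly diagnose that this is too weak (your example $a=\sqrt{q}$, $b=-(q-\sqrt{q})$ is exactly the kind of spurious solution moments cannot exclude), but the step you propose to close the gap --- expanding $\Wfdu$ in Gauss sums and invoking Stickelberger --- is not an argument, and I do not believe it can be made into one along the lines you sketch. Stickelberger converts $\val(G(\chi))$ into a base-$p$ digit sum, but the valuation of $\Wfdu$ then depends on which digit configurations are achievable for the specific exponent $d$, a hard combinatorial optimization; there is no mechanism in your sketch by which the \emph{three-valued hypothesis} constrains those digit sums, and indeed in the paper the Stickelberger-style results of Aubry--Katz--Langevin are used only to prove the \emph{upper} bound $\min_u \val(\Wfdu)\le n/2$ (Theorem \ref{Dorothy}), never a lower bound of this strength. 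You have in effect deferred the entire content of the theorem to the step you yourself flag as ``the main obstacle.'' Note also that even granting $\min(\val(a),\val(b))\ge n/2$, you would not get the stated dichotomy (both valuations $>n/2$, or both exactly $n/2$ with $\card{a-b}$ a power of $p$ exceeding $\sqrt{q}$) without the relation $\val(ab)\ge n$, which you never establish; it comes from the \emph{third} moment, which your proposal omits.

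The missing idea is the paper's arithmetic of the representation counts $V_u=\card{\{v\in F: v^d+(1-v)^d=u^d\}}$. The third and fourth moments evaluate to $q^2V_1$ and $q^2\sum_u V_u^2$ (Corollary \ref{Orestes}), so $V_1=a+b-ab/q$ and integrality of $V_1$ gives $q\mid ab$. Writing $\card{a}=a_oa_p$ etc.\ with $a_p,b_p,(a-b)_p$ powers of $p$, the group-algebra identity $a(a-b)A\Wc=q^2(V-b)$ (from $X=WV$ and $W\Wc=q^2$) yields the crucial divisibility $a_ob_o(a-b)_o\mid V_u$ for $u\neq 1$ (Lemma \ref{Zachary}); since each $V_u$ is a nonnegative integer, $V_u^2\ge a_ob_o(a-b)_oV_u$, and summing against the closed forms of Lemma \ref{Wilbur} gives the single inequality $a_pb_p\ge q(a-b)_o$, from which all cases of the theorem follow by short case analysis. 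Finally, the exclusion of case (ii) for $p=2,3$ does not come from any digit-parity analysis: it comes from the congruence $V_1\equiv 0\pmod{p}$, proved via McGuire's $S_3$-action $x\mapsto 1-x$, $x\mapsto 1/x$ on the roots of $X^d+(1-X)^d-1$ (Proposition \ref{Barbara}), combined with $p\mid a,b$ (Theorem \ref{Gloria}) in the formula $V_1=a+b-ab/q$, forcing the strict inequality $\val(ab)>n$. None of this counting structure appears in your proposal, and without it the proof does not go through.
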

For the fields of interest in Conjecture \ref{Herman}, we use the techniques of \cite{Aubry-Katz-Langevin} to show an upper bound on the $p$-divisibility of some $\Wfdu$.
\begin{theorem}\label{Dorothy}
Let $F$ be of characteristic $p$ and order $q=p^n$, with $n$ a power of $2$.
If $\Wfd$ is three-valued, then there is some $u \in \Fu$ such that $\val(\Wfdu) \leq n/2$.
\end{theorem}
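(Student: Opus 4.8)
The plan is to compute the multiplicative Fourier transform of the function $u \mapsto \Wfdu$ on $\Fu$ and to extract a small $p$-adic valuation from it by way of Stickelberger's theorem. Write $g(\chi)=\sum_{x \in \Fu}\chi(x)\psi(x)$ for the Gauss sum of a multiplicative character $\chi$, and let $e$ be the inverse of $d$ modulo $\card{\Fu}$. Expanding the two additive characters in $\psi(x^d)\psi(ux)$, substituting $y=x^d$ (so that $x=y^{e}$, a bijection of $\Fu$), and applying orthogonality of characters gives, for every nontrivial multiplicative character $\eta$ of $\Fu$,
\[
\sum_{u \in \Fu}\Wfdu\,\conj{\eta}(u)=g(\eta^{e})\,g(\eta^{-1}).
\]
Since each $\conj\eta(u)$ is a root of unity, hence a $p$-adic unit, the left-hand side is a sum of the values $\Wfdu$ weighted by $p$-adic units, so its valuation is at least $\min_{u}\val(\Wfdu)$. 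Running over $\eta$ we obtain the key inequality
\[
\min_{u \in \Fu}\val(\Wfdu)\ \le\ \min_{\eta \neq 1}\val\bigl(g(\eta^{e})\,g(\eta^{-1})\bigr),
\]
and since $\val(0)=\infty$, the $u$ realizing the minimum on the left has $\Wfdu \neq 0$. Thus it suffices to exhibit one nontrivial $\eta$ for which the Gauss-sum product has valuation at most $n/2$.

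Next I would invoke Stickelberger's theorem. Normalizing so that $\val(p)=1$ and letting $\omega$ be the Teichm\"uller character, one has $\val\bigl(g(\omega^{-k})\bigr)=s_p(k)/(p-1)$, where $s_p(k)$ denotes the sum of the base-$p$ digits of $k$ for $0 \le k \le \card{\Fu}$. Writing $\eta=\omega^{-j}$ with $1 \le j \le \card{\Fu}-1$, so that $\eta^{e}=\omega^{-\langle je\rangle}$ and $\eta^{-1}=\omega^{-(\card{\Fu}-j)}$, and using that $\card{\Fu}=p^{n}-1$ has every base-$p$ digit equal to $p-1$ (whence $s_p(\card{\Fu}-j)=n(p-1)-s_p(j)$ for such $j$), the target valuation becomes
\[
\val\bigl(g(\eta^{e})\,g(\eta^{-1})\bigr)=\frac{s_p(\langle je\rangle)+n(p-1)-s_p(j)}{p-1},
\]
where $\langle je\rangle$ is the residue of $je$ in $\{0,\dots,\card{\Fu}-1\}$. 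Hence the entire theorem reduces to the purely combinatorial claim that, when $n$ is a power of $2$, there exists $j$ with
\[
s_p(j)-s_p(\langle je\rangle)\ \ge\ \tfrac{1}{2}\,n(p-1);
\]
that is, multiplication by $e$ modulo $\card{\Fu}$ can be arranged to drop the base-$p$ digit sum by at least half of its maximal value $n(p-1)$.

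Establishing this digit-sum inequality is the heart of the argument, and the step I expect to be the \emph{main obstacle}. Here I would use the techniques of \cite{Aubry-Katz-Langevin}: exploit the factorization $\card{\Fu}=p^{n}-1=(p^{n/2}-1)(p^{n/2}+1)$, available precisely because $n$ is even, to split the cyclotomic index into two blocks of $n/2$ base-$p$ digits, noting that multiplication by $p^{n/2}$ merely swaps the two blocks and so preserves digit sums. One then chooses $j$ adapted to this splitting so that $\langle je\rangle$ lands in a residue class of small digit sum while $j$ itself has large digit sum, organizing the estimate by descent through the quadratic steps of the tower $\Fp \subset \F_{p^{2}} \subset \F_{p^{4}} \subset \cdots \subset F$, all of whose degrees are powers of $2$; induction on the $2$-power $n$ reduces the degree-$n$ optimization to the degree-$(n/2)$ one. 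Nondegeneracy of $d$ (equivalently, that $e$ is not a power of $p$ modulo $\card{\Fu}$) is what forces an admissible ``unbalanced'' $j$ to exist and the drop to reach the threshold $\tfrac12 n(p-1)$. The sharp quantitative control needed to hit exactly $n/2$ for every power of $2$ is the crux. That the power-of-$2$ hypothesis is genuinely essential is already visible for $p=2$, $n=3$, $d=3$ (a three-valued Gold case): there the maximal digit-sum drop is $1<\tfrac{3}{2}=\tfrac12 n(p-1)$, so the minimal Gauss-sum valuation is $2>n/2$, the inequality fails, and three-valuedness alone is seen to be insufficient.

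Finally, the three-valued hypothesis enters chiefly to fix the normalization and to connect with the surrounding results: by Theorem~\ref{Imogene} the values $0,a,b$ lie in $\Z$ and $d \equiv 1 \pmod{p-1}$, so the nonzero values are exactly $a$ and $b$ and the conclusion ``$\val(\Wfdu) \le n/2$ for some $u$'' is the assertion $\min(\val(a),\val(b)) \le n/2$. This contradicts alternative (i) of Theorem~\ref{Celine} and forces alternative~\eqref{Arthur}; since \eqref{Arthur} is excluded for $p=3$, the two bounds squeeze together to rule out three-valued $\Wfd$ over such fields, yielding the characteristic-$3$ case of Conjecture~\ref{Herman}.
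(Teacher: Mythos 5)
Your opening reduction is correct and is in fact the machinery that underlies the results the paper relies on: the identity $\sum_{u\in\Fu}\Wfdu\,\conj{\eta}(u)=g(\eta^{e})g(\eta^{-1})$, the valuation inequality $\min_u\val(\Wfdu)\le\val(g(\eta^{e})g(\eta^{-1}))$, and the Stickelberger translation into the digit-sum statement are all sound. But the proof stops exactly where the theorem lives. The claim that there exists $j$ with $s_p(j)-s_p(\langle je\rangle)\ge\tfrac12 n(p-1)$ is never established: ``choose $j$ adapted to this splitting,'' ``induction on the $2$-power $n$ reduces the degree-$n$ optimization to the degree-$(n/2)$ one,'' and ``nondegeneracy \ldots forces an admissible unbalanced $j$ to exist'' are declarations of intent, not arguments, and you flag them yourself as the main obstacle. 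As it stands this is a correct reduction followed by an unproven combinatorial conjecture, so the proposal has a genuine gap at its core.

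The gap is compounded by a misidentification of which hypothesis does the work. You assert that nondegeneracy of $d$ over $F$ is what forces the digit drop, with three-valuedness entering only ``to fix the normalization.'' The paper's proof uses three-valuedness in an essential, structural way: by Theorem \ref{Imogene} it gives $d\equiv 1\pmod{p-1}$, i.e., $d$ is \emph{degenerate over $\Fp$}. Since $n$ is a power of $2$, the tower $\Fp\subset\F_{p^2}\subset\F_{p^4}\subset\cdots\subset F$ consists of quadratic steps, degeneracy passes down to subfields, and $d$ is nondegenerate over $F$; hence there is a transition pair $K\subset L$ with $[L:K]=2$, $d$ degenerate over $K$ but not over $L$. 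Proposition \ref{Henry} then pins the minimum valuation over $L$ at exactly $[K:\Fp]$, and Proposition \ref{George} lifts this to $F$ with factor $[F:L]$, giving $[F:L][K:\Fp]=n/2$ in two lines. Your sketch never locates this degenerate-to-nondegenerate transition, which is precisely the ingredient that makes the quantitative threshold $\tfrac12 n(p-1)$ attainable; without the anchor at $\Fp$ supplied by three-valuedness, the existence of your ``unbalanced'' $j$ is unjustified. In effect you are attempting to reprove the Aubry--Katz--Langevin results from scratch via Stickelberger and halting at the hard step, where the paper simply invokes those results as black boxes and organizes the tower argument around the transition subfields. (Your closing paragraph, combining the theorem with Theorem \ref{Celine} to settle characteristic $3$, is correct but is the paper's subsequent deduction, not part of this proof.)
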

This generalizes the result of Calderbank, McGuire, Poonen, and Rubinstein, who proved the $p=2$ case in \cite{Calderbank-McGuire-Poonen-Rubinstein}.
Theorems \ref{Celine} and \ref{Dorothy} immediately combine to prove Conjecture \ref{Herman} in characteristic $2$ and $3$.
\begin{theorem}
If $F$ is of characteristic $p=2$ or $3$ with $[F:\Fp]$ a power of $2$, then $\Wfd$ is not three-valued.
\end{theorem}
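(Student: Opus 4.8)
The plan is to argue by contradiction, stitching together the two bounds already in hand. Suppose that $F$ has characteristic $p\in\{2,3\}$ and that $n=[F:\Fp]$ is a power of $2$, yet $\Wfd$ is three-valued. Write its three values as $0$, $a$, and $b$, and set $q=p^n$, so that both Theorem \ref{Celine} and Theorem \ref{Dorothy} apply: the former because $q=p^n$ with $p\in\{2,3\}$, the latter because $n$ is a power of $2$.

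First I would invoke Theorem \ref{Celine} to pin down the $p$-adic valuations of the values. That theorem presents the dichotomy (i)--(ii), but records that case \eqref{Arthur} is impossible when $p=2$ or $3$. Hence we are forced into case (i), namely $\val(a)>n/2$ and $\val(b)>n/2$. Since the only remaining value is $0$, with $\val(0)=\infty>n/2$, it follows that $\val(\Wfdu)>n/2$ for every $u\in\Fu$.

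This last inequality is exactly what Theorem \ref{Dorothy} forbids: under the hypothesis that $n$ is a power of $2$, it produces some $u\in\Fu$ with $\val(\Wfdu)\le n/2$. The two statements are contradictory, so no three-valued $\Wfd$ can exist, which is the assertion of the theorem.

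The combination itself is entirely formal; all of the genuine difficulty is quarantined in the two input theorems. The main obstacle is therefore not in this final deduction but in establishing the sharp lower bound of Theorem \ref{Celine} (in particular the delicate exclusion of case \eqref{Arthur} for the small primes $p=2,3$) together with the complementary upper bound of Theorem \ref{Dorothy}; once both are available, the characteristic $2$ and $3$ cases of Helleseth's conjecture drop out immediately.
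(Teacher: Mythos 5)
Your proof is correct and matches the paper's own argument exactly: the paper likewise notes that Theorems \ref{Celine} and \ref{Dorothy} ``immediately combine'' to give this result, with the exclusion of case \eqref{Arthur} for $p=2,3$ forcing $\val(a),\val(b)>n/2$ (and $\val(0)=\infty$), contradicting the existence of $u$ with $\val(\Wfdu)\le n/2$. Your appeal to Theorem \ref{Imogene} is implicit (as it is in the paper) in writing the three values as $0$, $a$, $b$, which is fine since Theorem \ref{Celine} is stated in exactly that form.
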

In Section \ref{Albert}, we outline a group algebra approach to this problem inspired by the work of Feng \cite{Feng}, and in Section \ref{Bernard} we use a group-theoretic approach of McGuire \cite{McGuire} to determine congruences on the zero counts of critical polynomials that arise in our proofs.
We then prove Theorem \ref{Celine} in Section \ref{Elizabeth}, and prove Theorem \ref{Dorothy} in Section Section \ref{Felicity}.

\section{The Group Algebra and the Fourier Transform}\label{Albert}

This section generalizes the group ring techniques of Feng \cite{Feng} to arbitrary characteristic.
As in the Introduction, $F$ is a finite field.

Consider the group algebra $\gr$ over $\C$, whose elements are written as formal sums $S=\sum_{u \in \Fu} S_u [u]$ with $S_u \in \C$.
We identify any subset $U$ of $\Fu$ with $\sum_{u \in U} [u]$ in $\gr$.
For example, $\Fu$ itself is identified with $\sum_{u \in \Fu}[u]$.
If $S \in \gr$, we let $\card{S}=\sum_{u\in F} S_u$; if $S$ represents a subset of $\Fu$, this is indeed the cardinality of that set.
Note that $S \Fu=\card{S} \Fu$ for any $S \in \gr$.

For $t \in \Z$ and $S=\sum_{u \in \Fu} S_u[u] \in \gr$, we write $S^{(t)}$ to denote $\sum_{u \in \Fu} S_u [u^t]$.
Note that $|S^{(t)}|=\card{S}$.
If $S \in \gr$, its conjugate is defined to be $\conj{S}=\sum_{u \in F} \conj{S}_u [u^{-1}]$, and note that $\card{\conj{S}}=\conj{\card{S}}$.

We let $\mchars$ denote the group of multiplicative characters from $\Fu$ to $\C$.
If $S=\sum_{u \in \Fu} S_u[u] \in \gr$, and $\chi \in \mchars$, we define $\chi(S)=\sum_{u \in \Fu} S_u \chi(u)$.
We call $\chi(S)$ the {\it Fourier coefficient of $S$ at $\chi$}, and we define the {\it Fourier transform of $S$}, denoted $\ft{S}$, to be a function from $\mchars$ to $\C$, where the value of $\ft{S}$ at $\chi$ is $\ft{S}(\chi)=\chi(S)$.

It is straightforward to show that the Fourier transform $S \mapsto \ft{S}$ is an isomorphism of $\C$-algebras from $\gr$ with its convolutional multiplication to the $\C$-algebra $\fr$ of functions from $\mchars$ into $\C$, with pointwise multiplication.
This affords an inverse Fourier transform,
\begin{equation}\label{Oswald}
S_u = \frac{1}{\card{\Fu}} \sum_{\chi \in \mchars} \chi(S) \conj{\chi(u)},
\end{equation}
so that $S=T$ if and only if $\chi(S)=\chi(T)$ for all $\chi\in\mchars$.

Note that if $t$ is an integer, then $\chi(S^{(t)})=\chi^t(S)$.
Similarly, $\chi(\conj{S})=\conj{\chi(S)}$.
If $\chi_0$ is the principal character, then $\chi_0(S)=\card{S}$ for all $S \in \gr$.

As in the Introduction, we let $d$ be a positive integer with $\gcd(d,\card{\Fu})=1$, let $\psi\colon F \to \C$ be the canonical additive character, and we set
\[
\Wu=\Wfdu=\sum_{x \in F} \psi(x^d+ u x).
\]
We are interested in the Fourier analysis of the group algebra element
\begin{equation}\label{Winston}
W=\sum_{u \in \Fu} \Wu [u],
\end{equation}
which records the various values of $\Wfd$ as its coefficients.
To this end, we introduce the group algebra element
\[
\Psi=\sum_{u \in \Fu} \psi(u) [u],
\]
which has a close connection to $W$.
\begin{lemma}\label{Aaron}
$W=\Psi \Psip{-1/d} + \Fu$, where $1/d$ denotes the multiplicative inverse of $d$ modulo $\card{\Fu}$.
\end{lemma}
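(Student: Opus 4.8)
The plan is to compute the convolution product $\Psi \Psip{-1/d}$ directly in the group algebra and then match coefficients against $W$. Writing $\Psi = \sum_{v \in \Fu} \psi(v)[v]$ and $\Psip{-1/d} = \sum_{w \in \Fu} \psi(w)[w^{-1/d}]$, the multiplication rule $[a][b]=[ab]$ gives
\[
\Psi \Psip{-1/d} = \sum_{v,w \in \Fu} \psi(v)\psi(w)\,[v w^{-1/d}].
\]
First I would extract the coefficient of a fixed $[u]$: the contributing terms are those with $v w^{-1/d}=u$, i.e.\ $v = u w^{1/d}$, so that coefficient equals $\sum_{w \in \Fu} \psi(u w^{1/d})\,\psi(w)$.

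The key step is the substitution $x = w^{1/d}$. Because $\gcd(d,\card{\Fu})=1$, the map $w \mapsto w^{1/d}$ is a bijection of $\Fu$, so as $w$ runs over $\Fu$ so does $x$, with $w = x^d$. This rewrites the coefficient of $[u]$ as $\sum_{x \in \Fu} \psi(u x)\,\psi(x^d) = \sum_{x \in \Fu}\psi(x^d + ux)$, which is exactly $\Wu$ with its $x=0$ term removed. Since $\psi(0)=1$, that missing term equals $1$, which is precisely the coefficient of $[u]$ in $\Fu = \sum_{u\in\Fu}[u]$.

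Thus for every $u \in \Fu$ the coefficient of $[u]$ in $\Psi\Psip{-1/d} + \Fu$ equals $\Wu$, i.e.\ the coefficient of $[u]$ in $W$, which finishes the argument. The one step that is not pure bookkeeping is the bijectivity of $w \mapsto w^{1/d}$, which is exactly where the hypothesis $\gcd(d,\card{\Fu})=1$ is used; I do not expect any serious obstacle beyond keeping the exponents straight. As an alternative, one could instead verify the identity on the Fourier side by checking $\chi(W)=\chi(\Psi)\,\chi^{-1/d}(\Psi)+\chi(\Fu)$ for all $\chi\in\mchars$ and invoking \eqref{Oswald}, but the direct coefficient comparison seems the most transparent route.
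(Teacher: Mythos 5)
Your proof is correct and is essentially the paper's own argument: both compute the convolution $\Psi\Psip{-1/d}=\sum_{v,w}\psi(v)\psi(w)[vw^{-1/d}]$ and apply the substitution $w=x^d$, $v=ux$ (justified by $\gcd(d,\card{\Fu})=1$), identifying the missing $x=0$ term of each $\Wu$ with the coefficient $1$ contributed by $\Fu$. Your version merely organizes the same reparameterization as a coefficient-by-coefficient extraction rather than a global double-sum rewrite, which is an immaterial difference.
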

\begin{proof}
Note that $\Psi \Psip{-1/d} = \sum_{y,z \in \Fu} \psi(y) \psi(z) [y z^{-1/d}]$, but then reparameterize with $z=x^d$ and $y=u x$ to get $\Psi \Psip{-1/d} = \sum_{u,x \in \Fu} \psi(u x) \psi(x^d) [u] = \sum_{u \in \Fu} (\Wu-1) [u]$, so that $\Psi \Psip{-1/d}+\Fu=W$.
\end{proof}
Now we may carry out the Fourier analysis of $\Psi$ and $W$.
\begin{lemma}\label{Eric}
If $\chi \in \mchars$ is not the principal character $\chi_0$, then $\card{\chi(\Psi)}=\sqrt{\card{F}}$, whereas $\chi_0(\Psi)=\card{\Psi}=-1$.
\end{lemma}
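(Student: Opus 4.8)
The plan is to recognize the Fourier coefficient $\chi(\Psi)=\sum_{u\in\Fu}\psi(u)\chi(u)$ as a Gauss sum and to invoke the two classical facts about such sums. First I would dispose of the principal character. Since $\chi_0(u)=1$ for every $u\in\Fu$, we have $\chi_0(\Psi)=\card{\Psi}=\sum_{u\in\Fu}\psi(u)$; because $\psi$ is a nontrivial additive character, $\sum_{x\in F}\psi(x)=0$, and removing the term $\psi(0)=1$ gives $\chi_0(\Psi)=-1$. This is also consistent with the general identity $\chi_0(S)=\card{S}$ noted just before the lemma.

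For a nonprincipal $\chi$, I would compute the squared modulus directly rather than the value itself. Writing $\card{\chi(\Psi)}^2=\chi(\Psi)\conj{\chi(\Psi)}=\sum_{u,v\in\Fu}\chi(u)\conj{\chi(v)}\,\psi(u)\conj{\psi(v)}$ and using $\conj{\psi(v)}=\psi(-v)$, I would reparameterize by setting $u=vw$ with $w\in\Fu$. Since $\card{\chi(v)}=1$, the factor $\chi(u)\conj{\chi(v)}$ collapses to $\chi(w)$, and the additive part becomes $\psi(v(w-1))$, so the double sum factors as $\sum_{w\in\Fu}\chi(w)\sum_{v\in\Fu}\psi(v(w-1))$. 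The inner sum equals $\card{\Fu}=\card{F}-1$ when $w=1$, and equals $-1$ for each $w\neq 1$, again because $\psi$ sums to zero over all of $F$ and we discard the $v=0$ term.

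Collecting these contributions yields $\card{\chi(\Psi)}^2=(\card{F}-1)\chi(1)-\sum_{w\neq 1}\chi(w)$. Because $\chi$ is nonprincipal, orthogonality of multiplicative characters gives $\sum_{w\in\Fu}\chi(w)=0$, hence $\sum_{w\neq 1}\chi(w)=-\chi(1)=-1$, and therefore $\card{\chi(\Psi)}^2=(\card{F}-1)+1=\card{F}$, so $\card{\chi(\Psi)}=\sqrt{\card{F}}$. The only point requiring attention is the bookkeeping that isolates the $w=1$ term before applying orthogonality; beyond that, the argument is a routine application of additive- and multiplicative-character orthogonality, so I do not anticipate a genuine obstacle in this lemma.
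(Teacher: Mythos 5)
Your proof is correct and takes essentially the same approach as the paper: both identify $\chi(\Psi)=\sum_{u\in\Fu}\psi(u)\chi(u)$ as a Gauss sum, the only difference being that the paper cites the classical facts (magnitude $\sqrt{\card{F}}$ for nonprincipal $\chi$, value $-1$ at $\chi_0$) from Lidl--Niederreiter, while you inline the standard orthogonality computation that proves those same facts. Your bookkeeping (the substitution $u=vw$, isolating the $w=1$ term, then multiplicative orthogonality) is the textbook argument behind the cited theorem, so there is no gap.
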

\begin{proof}
For any $\chi \in \mchars$, we have
\[
\chi(\Psi)=\sum_{u \in \Fu} \psi(u) \chi(u),
\]
which is a Gauss sum, of which the magnitude in general and the value when $\chi=\chi_0$ are well known (see \cite[Theorem 5.11]{Lidl-Niederreiter}).
\end{proof}
\begin{corollary}\label{Bartholomew}
If $t \in \Z$ with $\gcd(t,\card{\Fu})=1$, then $\Psip t \conj{\Psip t} =\card{F}-\Fu$.
\end{corollary}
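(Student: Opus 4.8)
The plan is to verify the identity by passing to the Fourier transform. Since $S \mapsto \ft{S}$ is an isomorphism of $\C$-algebras, it suffices to check that $\chi(\Psip{t} \conj{\Psip{t}}) = \chi(\card{F} - \Fu)$ for every $\chi \in \mchars$. Because convolution in $\gr$ becomes pointwise multiplication, the left-hand coefficient factors as $\chi(\Psip{t}) \, \chi(\conj{\Psip{t}})$. I would then feed in the two bookkeeping identities already recorded, $\chi(S^{(t)}) = \chi^t(S)$ and $\chi(\conj{S}) = \conj{\chi(S)}$, to rewrite this as $\chi^t(\Psi) \, \conj{\chi^t(\Psi)} = \card{\chi^t(\Psi)}^2$.

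The next step is to evaluate $\card{\chi^t(\Psi)}^2$ using Lemma \ref{Eric}. This is where the hypothesis $\gcd(t,\card{\Fu})=1$ does the essential work: raising to the $t$-th power is a bijection of $\mchars$ that fixes the principal character, so $\chi^t = \chi_0$ precisely when $\chi = \chi_0$. Hence for $\chi = \chi_0$ the coefficient equals $\card{\chi_0(\Psi)}^2 = \card{-1}^2 = 1$, and for every nonprincipal $\chi$ it equals $(\sqrt{\card{F}})^2 = \card{F}$.

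It remains to compute the right-hand coefficient $\chi(\card{F} - \Fu)$, reading the scalar $\card{F}$ as $\card{F}$ times the group identity $[1]$. Since $\chi([1]) = 1$, and since orthogonality of characters gives $\chi(\Fu) = \sum_{u \in \Fu}\chi(u)$ equal to $\card{\Fu} = \card{F}-1$ when $\chi = \chi_0$ and to $0$ otherwise, this coefficient is $\card{F} - (\card{F}-1) = 1$ at $\chi_0$ and $\card{F} - 0 = \card{F}$ at each nonprincipal $\chi$. These match the left-hand values in both cases, so the two elements of $\gr$ have identical Fourier transforms and therefore coincide.

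I do not expect a genuine obstacle here, since every step is dictated by the transform dictionary assembled above; the proof is essentially a matching of coefficients at $\chi_0$ versus at nonprincipal characters. The only points requiring care are that the coprimality hypothesis is exactly what prevents the permutation $\chi \mapsto \chi^t$ from carrying a nonprincipal character onto $\chi_0$ (which would spoil the case split), and that the term $\card{F}$ in the statement must be interpreted as a scalar multiple of $[1]$ rather than as a formal sum over $\Fu$.
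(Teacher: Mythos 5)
Your proof is correct and follows essentially the same route as the paper: both verify the identity by computing the Fourier coefficient $\chi(\Psip{t}\conj{\Psip{t}})=\card{\chi^t(\Psi)}^2$ via Lemma \ref{Eric}, use the coprimality of $t$ with $\card{\Fu}$ to ensure $\chi^t$ is principal exactly when $\chi$ is, match against $\chi(\card{F}-\Fu)$, and conclude by the injectivity of the Fourier transform (equation \eqref{Oswald}). Your write-up merely spells out details the paper leaves implicit, such as the orthogonality computation of $\chi(\Fu)$ and the reading of the scalar $\card{F}$ as $\card{F}[1]$.
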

\begin{proof}
For $\chi\in\mchars$, we have $\chi(\Psip t \conj{\Psip t})=|\chi(\Psip t)|^2=|\chi^t(\Psi)|^2$, which by Lemma \ref{Eric} and the invertibility of $t$ modulo $\card{\Fu}=|\mchars|$, equals $1$ if $\chi$ is principal, and equals $\card{F}$ otherwise.
One obtains the same values by applying $\chi$ to $\card{F}-\Fu$, so that$\Psip t \conj{\Psip t}=\card{F}-\Fu$ by \eqref{Oswald}.
\end{proof}
\begin{corollary}\label{William}
$W \Wc=\card{F}^2$.
\end{corollary}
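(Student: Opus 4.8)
The plan is to verify the identity character-by-character, exploiting the uniqueness guaranteed by the inverse Fourier transform \eqref{Oswald}: it suffices to check that $\chi(W\Wc)=\chi(\card{F}^2)$ for every $\chi \in \mchars$. Since the scalar $\card{F}^2$ is identified with $\card{F}^2[1]$ in $\gr$, and every character satisfies $\chi(1)=1$, the right-hand side is simply $\card{F}^2$. For the left-hand side, the fact that $S \mapsto \ft{S}$ is an algebra isomorphism, together with $\chi(\Wc)=\conj{\chi(W)}$, gives $\chi(W\Wc)=\chi(W)\conj{\chi(W)}=\card{\chi(W)}^2$. So the entire problem collapses to showing that $\card{\chi(W)}^2=\card{F}^2$ for all $\chi$.

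Next I would compute $\chi(W)$ using Lemma \ref{Aaron}. Applying $\chi$ to $W=\Psi\Psip{-1/d}+\Fu$ and recalling that $\chi(\Psip{-1/d})=\chi^{-1/d}(\Psi)$ yields
\[
\chi(W)=\chi(\Psi)\,\chi^{-1/d}(\Psi)+\chi(\Fu).
\]
Here I would split into two cases. If $\chi$ is not principal, then $\chi(\Fu)=\sum_{u\in\Fu}\chi(u)=0$ by orthogonality, so $\chi(W)=\chi(\Psi)\,\chi^{-1/d}(\Psi)$; since $-1/d$ is coprime to $\card{\Fu}$, the character $\chi^{-1/d}$ is also non-principal, and Lemma \ref{Eric} forces both Gauss-sum factors to have magnitude $\sqrt{\card{F}}$, whence $\card{\chi(W)}=\card{F}$. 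If instead $\chi=\chi_0$, then Lemma \ref{Eric} gives $\chi_0(\Psi)=\chi_0^{-1/d}(\Psi)=-1$ while $\chi_0(\Fu)=\card{\Fu}=\card{F}-1$, so $\chi_0(W)=(-1)(-1)+(\card{F}-1)=\card{F}$. In both cases $\card{\chi(W)}^2=\card{F}^2$, completing the argument.

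This proof is essentially routine bookkeeping that parallels the proof of Corollary \ref{Bartholomew}, so there is no substantial obstacle. The only points demanding care are, first, the observation that $-1/d$ is invertible modulo $\card{\Fu}$, which is exactly what lets Lemma \ref{Eric} apply simultaneously to both factors $\chi(\Psi)$ and $\chi^{-1/d}(\Psi)$; and second, the need to treat the principal character separately, since there the generic magnitude $\sqrt{\card{F}}$ is replaced by $\chi_0(\Psi)=-1$ and the term $\chi_0(\Fu)=\card{F}-1$ no longer vanishes. It is a pleasant feature of the computation that these two exceptional contributions conspire to produce the same value $\card{F}$ as in the generic case.
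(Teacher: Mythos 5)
Your proof is correct, but it takes a slightly different route than the paper. The paper stays on the group-algebra side: it multiplies $W=\Psi \Psip{-1/d}+\Fu$ by its conjugate directly in $\gr$, handles the cross terms with the identity $S\Fu=\card{S}\Fu$ together with $\card{\Psi}=\card{\Psip{-1/d}}=-1$ from Lemma \ref{Eric}, and then disposes of the main term $\Psi\conj{\Psi}\,\Psip{-1/d}\conj{\Psip{-1/d}}$ by invoking Corollary \ref{Bartholomew}, which says each factor equals $\card{F}-\Fu$. You instead work entirely on the Fourier side: you compute $\chi(W)$ for every character from Lemma \ref{Aaron}, check $\card{\chi(W)}=\card{F}$ in both the principal and non-principal cases, and conclude by injectivity of the transform via \eqref{Oswald}. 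In effect you re-run, for $W$ itself, the same character-by-character argument that the paper used to prove Corollary \ref{Bartholomew}, rather than quoting that corollary; the computational content (Gauss-sum magnitudes from Lemma \ref{Eric}, invertibility of $-1/d$ modulo $\card{\Fu}$) is identical. Your version buys a slightly stronger intermediate fact---the Fourier transform of $W$ has constant modulus $\card{F}$ on all of $\mchars$, with the two exceptional contributions at $\chi_0$ conspiring to give the same value---while the paper's version buys economy by reusing Corollary \ref{Bartholomew} and avoiding a case split. Both are complete and correct.
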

\begin{proof}
In view of Lemma \ref{Aaron}, we multiply $W=\Psi \Psip{-1/d} + \Fu$ by its conjugate.
Note that $\conj{\Fu}=\Fu$, recall that $S \Fu=\card{S} \Fu$ for any $S \in \gr$, and use Lemma \ref{Eric} to see that $\card{\Psip{-1/d}}=\card{\Psi}=-1$ and thus $\card{\conj{\Psip{-1/d}}}=\card{\conj{\Psi}}=-1$, so that $W\conj{W}=\Psi \conj{\Psi} \Psip{-1/d} \conj{\Psip{-1/d}} + (\card{\Fu}+2) \Fu$.
Then apply Corollary \ref{Bartholomew} to the first term to finish.
\end{proof}
Now define
\begin{equation}\label{Xavier}
X = \sum_{u \in \Fu} \Wu^2 [u]
\end{equation}
and
\begin{equation}\label{Veronica}
V = \sum_{v \in F} [(v^d+(1-v)^d)^{1/d}], 
\end{equation}
where the $1/d$ is understood modulo $\card{\Fu}$.
We claim that $v^d+(1-v)^d\not=0$ for any $v \in F$, because $v\not=v-1$, and the condition $\gcd(d,\card{\Fu})=1$ makes $x\mapsto x^d$ a permutation of $F$ and makes $d$ odd when the characteristic of $F$ is odd.
This makes $V$ a legitimate element of $\gr$ with
\begin{equation}\label{Fred}
\card{V}=\card{F}.
\end{equation}
We can now relate $W$, $X$, and $V$.
\begin{lemma}\label{Ursula} 
$X = W V.$
\end{lemma}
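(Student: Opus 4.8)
The plan is to verify the identity coefficient-by-coefficient. Both $X$ and $WV$ are supported on $\Fu$ (the element $V$ lies in $\gr$ because each $(v^d+(1-v)^d)^{1/d}$ is a unit, as noted just before the statement), so it suffices to fix $t \in \Fu$ and check that the coefficient of $[t]$ agrees on the two sides. On the left that coefficient is simply $W_t^2$. On the right, writing $w(v) = (v^d+(1-v)^d)^{1/d} \in \Fu$ and using that convolution sends $[u][w(v)]$ to $[u\, w(v)]$, the coefficient of $[t]$ in $WV$ is $\sum_{v \in F} W_{t\, w(v)^{-1}}$, since for each $v$ there is a unique $u = t\, w(v)^{-1}$ with $u\, w(v) = t$.

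The key manipulation is to rescale the summation variable inside each $W_{t\, w(v)^{-1}} = \sum_{z \in F} \psi(z^d + t\, w(v)^{-1} z)$. Substituting $z = x\, w(v)$, which is a bijection of $F$ because $w(v)\neq 0$, and using $w(v)^d = v^d+(1-v)^d$, turns this into $\sum_{x \in F} \psi\bigl(x^d(v^d+(1-v)^d) + t x\bigr)$. I would then factor $x^d(v^d+(1-v)^d) = (xv)^d + (x(1-v))^d$ and write $tx = t(xv) + t(x(1-v))$, so that with $a = xv$ and $b = x(1-v)$ (note $a+b = x$) the double sum over $(x,v) \in F \times F$ becomes $\sum_{x, v} \psi(a^d + b^d + t(a+b))$.

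The crux is then to understand the substitution $(x,v) \mapsto (a,b)$. Away from $x = 0$ it is a bijection onto the pairs with $a + b \neq 0$: given such a pair, $x = a+b$ and $v = a/(a+b)$ are forced. The whole fiber $x = 0$, however, collapses to the single point $(a,b)=(0,0)$, contributing $\card{F}\cdot\psi(0) = \card{F}$. Hence the coefficient of $[t]$ in $WV$ equals $\sum_{a+b\neq 0} \psi(a^d + b^d + t(a+b)) + \card{F}$. Comparing with $W_t^2 = \sum_{a,b \in F} \psi(a^d+b^d+t(a+b))$, the two differ only in the diagonal $a+b=0$, so I must show that diagonal contributes exactly $\card{F}$.

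This diagonal bookkeeping is where the only real care is needed, and it is exactly where the hypotheses on $d$ enter. On $a + b = 0$ each summand is $\psi(a^d + (-a)^d)$; since $\gcd(d,\card{\Fu}) = 1$ forces $d$ to be odd when the characteristic is odd (and the point is moot in characteristic $2$, where $-a = a$ and $a^d + a^d = 0$), every such summand is $\psi(0) = 1$, and the diagonal contributes $\card{F}$. Thus $W_t^2 = \sum_{a+b\neq 0}\psi(\cdots) + \card{F}$ matches the coefficient of $[t]$ in $WV$, and since $t \in \Fu$ was arbitrary, $X = WV$. I expect the main obstacle to be purely organizational: keeping the degenerate fiber $x=0$ and the dropped diagonal $a+b=0$ aligned so that their equal contributions of $\card{F}$ cancel cleanly, rather than any substantive difficulty.
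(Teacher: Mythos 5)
Your proof is correct and takes essentially the same route as the paper: both compute the coefficient of $[t]$ in $WV$ via convolution, perform the change of variables $(a,b)=(xv,\,x(1-v))$ (the paper's bijection $(x,y)\mapsto(x/(x+y),(x^d+y^d)^{1/d})$ is the same substitution run in the other direction, with the Weil-sum variable rescaled), and trade the $\card{F}$ contributed by the collapsed fiber for the diagonal $a+b=0$, using that $\gcd(d,\card{\Fu})=1$ forces $d$ odd in odd characteristic. If anything, you spell out the diagonal bookkeeping at the last step more explicitly than the paper does.
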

\begin{proof}
Note that
\begin{align*}
(W V)_u 
& = \sum_{z \in \Fu} W_{u/z} V_z \\
& = \sum_{v \in F} W_{u (v^d+(1-v)^d)^{-1/d}} \\
& = \sum_{v ,w\in F} \psi(w^d+ u (v^d+(1-v)^d)^{-1/d} w) \\
& = \card{F} + \sum_{v \in F} \sum_{w \in F^*} \psi(w^d+ u (v^d+(1-v)^d)^{-1/d} w).
\end{align*}
Since $\gcd(d,\card{\Fu})=1$ makes $d$ odd when $F$ is of odd characteristic, the map $(x,y) \mapsto (x/(x+y),(x^d+y^d)^{1/d})$ is a bijection from $\{(x,y) \in F^2 : x+y\not=0\}$ to $F\times\Fu$, with inverse $(v,w)\mapsto (v^d+(1-v)^d)^{-1/d} (v w,(1-v) w)$.  We may then reparameterize our sum to obtain
\begin{align*}
(W V)_u 
& = \card{F}+\sums{x,y \in F \\ x+y\not=0} \psi(x^d + y^d + u (x+y)) \\
& = \sum_{x,y \in F} \psi(x^d+y^d+u(x+y)) \\
& = \Wu^2.\qedhere 
\end{align*}
\end{proof}
It is now easy to calculate the first four power moments of the Weil sum.
\begin{corollary}\label{Orestes}
We have
\begin{enumerate}[(i).]
\item\label{Katherine} $\sum_{u \in \Fu} \Wu=\card{F}$,
\item\label{Laura} $\sum_{u \in \Fu} \Wu^2=\card{F}^2$, 
\item\label{Mary} $\sum_{u \in \Fu} \Wu^3=\card{F}^2 V_1$, and 
\item\label{Natasha} $\sum_{u \in \Fu} \Wu^4=\card{F}^2 \sum_{u \in \Fu} V_u^2$. 
\end{enumerate}
\end{corollary}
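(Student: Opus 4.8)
The plan is to read each power moment off as a single coefficient of a suitable product in the group algebra $\gr$, relying on the two structural identities already established: Lemma~\ref{Ursula} ($X=WV$) and Corollary~\ref{William} ($W\Wc=\card{F}^2$), with Lemma~\ref{Aaron} handling the first moment. The only auxiliary fact I need is that the coefficients of $W$, $V$, and hence $X$ are all real. For $V$ this is immediate from \eqref{Veronica}, since each $V_u$ is a nonnegative integer counting solutions. For $W$ it follows because in odd characteristic $\conj{\Wu}=\sum_{x\in F}\psi(-(x^d+ux))$ reduces to $\Wu$ after the substitution $x\mapsto -x$ (using that $d$ is odd), while in characteristic $2$ the character $\psi$ is real-valued so $\Wu$ is manifestly real; then $X_u=\Wu^2$ is real as well. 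The computational engine is the elementary remark that, for $A=\sum_u A_u[u]$ and $B=\sum_u B_u[u]$ in $\gr$, the coefficient of $A\conj{B}$ at the identity $[1]$ is $\sum_u A_u\conj{B_u}$, which equals $\sum_u A_u B_u$ once the coefficients are real.

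For \eqref{Katherine}, note that $W$ is supported on $\Fu$, so $\sum_{u\in\Fu}\Wu=\card{W}$, and I compute $\card{W}$ from Lemma~\ref{Aaron}. Since $\card{\cdot}=\chi_0(\cdot)$ is multiplicative and $\card{\Psip{-1/d}}=\card{\Psi}=-1$ by Lemma~\ref{Eric}, we get $\card{\Psi\Psip{-1/d}}=1$, whence $\card{W}=1+\card{\Fu}=\card{F}$. For \eqref{Laura}, observe from \eqref{Xavier} that $\sum_{u\in\Fu}\Wu^2=\card{X}$ outright, and then $\card{X}=\card{W}\card{V}=\card{F}\cdot\card{F}$ by Lemma~\ref{Ursula}, part~\eqref{Katherine}, and \eqref{Fred}.

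For \eqref{Mary} and \eqref{Natasha} I invoke the identity-coefficient mechanism; here conjugation is a multiplicative involution on the commutative group algebra, so $\conj{X}=\conj{W}\,\conj{V}$. Writing $\Wu^3=\Wu X_u$, the third moment is the $[1]$-coefficient of $W\conj{X}=(W\Wc)\conj{V}=\card{F}^2\conj{V}$ by Corollary~\ref{William}, and reading off the $[1]$-coefficient of $\conj{V}$ yields $\card{F}^2 V_1$. Likewise $\Wu^4=X_u^2$, so the fourth moment is the $[1]$-coefficient of $X\conj{X}=(W\Wc)(V\conj{V})=\card{F}^2 V\conj{V}$, namely $\card{F}^2\sum_{u\in\Fu}V_u^2$.

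The manipulations are routine once Lemmas~\ref{Aaron} and~\ref{Ursula} and Corollary~\ref{William} are in hand; the one place that demands care is the reality of the coefficients, since that is exactly what permits replacing $\conj{B_u}$ by $B_u$ and so identifying an identity-coefficient with an honest power moment rather than a sum of mixed-conjugate type.
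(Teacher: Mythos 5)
Your proof is correct and follows essentially the same route as the paper: part \eqref{Katherine} via Lemmata \ref{Aaron} and \ref{Eric}, part \eqref{Laura} via $\card{X}=\card{W}\card{V}$, and parts \eqref{Mary} and \eqref{Natasha} by reading off the $[1]$-coefficients of $X\Wc$ (you use the conjugate product $W\conj{X}$, which is the same computation) and $X\conj{X}$ using Lemma \ref{Ursula} and Corollary \ref{William}. Your explicit verification that the coefficients $\Wu$ are real (via $x\mapsto -x$ in odd characteristic, since $d$ is odd) is a nice touch of care, as the paper uses this fact tacitly when identifying identity-coefficients with power moments.
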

\begin{proof}
First of all, $\sum_{u \in \Fu} \Wu=\card{W}$, which equals $\card{\Psi} |\Psip{-1/d}| + \card{\Fu}=(-1)^2+\card{\Fu}$ by Lemmata \ref{Aaron} and \ref{Eric}.

Secondly, $\sum_{u \in \Fu} \Wu^2=\card{X}$, which equals $\card{W} \cdot \card{V}=\card{F}^2$ by Lemma \ref{Ursula}, equation \eqref{Fred}, and the previous part.

Thirdly, $\sum_{u \in \Fu} \Wu^3$ is the coefficient for the element $[1]$ in $X \Wc$, and $X\Wc=(W V)\Wc=\card{F}^2 V$ by Lemma \ref{Ursula} and Corollary \ref{William}.

Finally, $\sum_{u \in \Fu} \Wu^4$ is the coefficient of $[1]$ in $X \conj{X}$, and we see that $X\conj{X}=(W V)(\conj{W V})=\card{F}^2 V \conj{V}$ by Lemma \ref{Ursula} and Corollary \ref{William}.
\end{proof}

\section{Congruences for $V_1$}\label{Bernard}

As in previous sections, $F$ is a finite field and $d$ is a positive integer with $\gcd(d,\card{\Fu})=1$.
We recall $V$ from equation \eqref{Veronica}, whose coefficient $V_1$ appears in Corollary \ref{Orestes}.\ref{Mary}.
We deduce useful congruences for $V_1$ by means of a group action studied by McGuire \cite{McGuire}.
\begin{proposition}\label{Barbara} Let $V_1=\card{\{v \in F : v^d+(1-v)^d=1\}}$.
\begin{enumerate}[(i).]
\item\label{Herbert} If $\card{F} \equiv 0 \pmod{3}$, then $V_1 \equiv 3 \pmod{6}$.
\item If $\card{F} \equiv 1 \pmod{3}$, then
\begin{enumerate}[(a).]
\item if $2^{d-1}=1$ in $F$, then $V_1 \equiv 1 \pmod{6}$, but
\item if $2^{d-1}\not=1$ in $F$, then $V_1 \equiv 4 \pmod{6}$.
\end{enumerate}
\item If $\card{F} \equiv 2 \pmod{3}$, then 
\begin{enumerate}[(a).]
\item if $2^{d-1}=1$ in $F$, then $V_1 \equiv 5 \pmod{6}$, but
\item if $2^{d-1}\not=1$ in $F$, then $V_1 \equiv 2 \pmod{6}$.
\end{enumerate}
\end{enumerate}
\end{proposition}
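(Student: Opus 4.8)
The plan is to realize $V_1$ as (one less than) the cardinality of a set carrying a natural action of the symmetric group $S_3$, and then to extract $V_1 \bmod 2$ and $V_1 \bmod 3$ from the fixed-point counts of a transposition and of a $3$-cycle, respectively. Write $S = \{v \in F : v^d + (1-v)^d = 1\}$, so $V_1 = \card{S}$. First I would record the two obvious members $0,1 \in S$ and the obvious symmetry $v \mapsto 1-v$ of the defining equation. A second symmetry $v \mapsto 1/v$ preserves $S \setminus \{0\}$: using that $\gcd(d,\card{\Fu})=1$ forces $d$ to be odd in odd characteristic, one checks $(1/v)^d + (1-1/v)^d = v^{-d}(1 - (1-v)^d) = 1$ whenever $v \in S$. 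Together these two involutions generate the anharmonic group $G \cong S_3$ acting on the projective line $F \cup \{\infty\}$ and permuting $\{0,1,\infty\}$ as $S_3$ permutes a $3$-element set. Since $0,1 \in S$, I adjoin the point at infinity and set $T = S \cup \{\infty\}$; the verifications above show $T$ is $G$-invariant, and $\card{T} = V_1 + 1$.

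Next I would reduce to fixed-point counts. Since $\card{G}=6$, every $G$-orbit in $T$ has size $1$, $2$, $3$, or $6$, according to whether the point-stabilizer has order $6$, $3$, $2$, or $1$. Writing $n_1,n_2,n_3,n_6$ for the number of orbits of each size, we have $\card{T} = n_1 + 2n_2 + 3n_3 + 6n_6$. A direct bookkeeping of stabilizers shows that a transposition $t \in G$ fixes exactly one point of each orbit of size $1$ or $3$ and no point of the others, so $\card{\mathrm{Fix}(t)} = n_1 + n_3 \equiv \card{T} \pmod 2$; and a $3$-cycle $r \in G$ fixes the single point of each size-$1$ orbit and both points of each size-$2$ orbit, so $\card{\mathrm{Fix}(r)} = n_1 + 2n_2 \equiv \card{T} \pmod 3$. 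Thus it suffices to count $\mathrm{Fix}(\sigma)$ for $\sigma : v \mapsto 1-v$ and $\mathrm{Fix}(\rho)$ for $\rho : v \mapsto 1/(1-v)$.

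For the mod-$2$ part, $\sigma$ fixes only $\infty$ and, in odd characteristic, the point $v=1/2$, and $\infty \in T$ always. Since $(1/2)^d + (1/2)^d = 2^{1-d}$, the point $1/2$ lies in $S$ exactly when $2^{d-1}=1$ in $F$; note that in characteristic $3$ one has $2=-1$ and $d$ odd, so $2^{d-1}=1$ automatically. Hence $\card{\mathrm{Fix}(\sigma)} = 1 + [\,2^{d-1}=1\,]$, giving $V_1$ odd when $2^{d-1}=1$ and $V_1$ even otherwise. For the mod-$3$ part, the fixed points of $\rho$ are the roots of $v^2 - v + 1$ in $F$. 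Each such root $v_0$ satisfies $v_0 + v_0^{-1} = 1$, hence $1 - v_0 = v_0^{-1}$, and has multiplicative order dividing $6$; since $\gcd(d,\card{\Fu})=1$ forces $d \equiv \pm 1$ modulo that order, we get $v_0^d + (1-v_0)^d = v_0^d + v_0^{-d} = v_0 + v_0^{-1} = 1$, so every such root lies in $S \subseteq T$. It then remains to count the roots of $v^2 - v + 1$: there is one (a double root, in characteristic $3$) when $\card{F} \equiv 0 \pmod 3$, there are two when $\card{F} \equiv 1 \pmod 3$, and there are none when $\card{F} \equiv 2 \pmod 3$. This yields $\card{\mathrm{Fix}(\rho)} \equiv 1,2,0 \pmod 3$ in these three cases, i.e. $V_1 \equiv 0,1,2 \pmod 3$ respectively.

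Finally I would assemble the six cases by the Chinese Remainder Theorem from the residues of $V_1$ modulo $2$ and modulo $3$ found above; for instance $\card{F} \equiv 0 \pmod 3$ forces characteristic $3$, hence $2^{d-1}=1$ and $V_1$ odd, which together with $V_1 \equiv 0 \pmod 3$ gives $V_1 \equiv 3 \pmod 6$, and the remaining cases follow identically. I expect the main obstacle to be the careful justification that $T$ is genuinely $G$-invariant and that the fixed-point counts are correct in the degenerate characteristics $2$ and $3$, where $v^2 - v + 1$ acquires a repeated root or changes the order of its roots, and where the point $1/2$ either coincides with a special point or fails to exist; handling these coincidences, rather than the orbit-counting itself, is where the argument needs the most care.
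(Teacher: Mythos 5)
Your proof is correct, and it reaches the congruences by a genuinely different counting mechanism than the paper, although both rest on the same underlying $S_3$-action (the group generated by $v \mapsto 1-v$ and $v \mapsto 1/v$, due to McGuire, which the paper uses as well). The paper keeps the action on $F \smallsetminus \{0,1\}$, counts $0$ and $1$ as two standing roots of $X^d+(1-X)^d-1$, and then explicitly enumerates every possible short orbit by listing all points fixed by a nonidentity element --- namely $-1$, $2$, $1/2$, and the roots of $\Phi_6(X)=X^2-X+1$ --- followed by a case analysis in characteristics $3$, $2$, and $\geq 5$ that adds the contributions modulo $6$. You instead compactify: adjoining $\infty$ turns $\{0,1,\infty\}$ into a single size-$3$ orbit, and the congruence $\card{T} \equiv \card{\mathrm{Fix}(g)} \pmod{\mathrm{ord}(g)}$ for $g$ of prime order (your $n_1,n_2,n_3,n_6$ bookkeeping is a correct, slightly roundabout derivation of this standard fact) reduces everything to two fixed-point computations plus the Chinese Remainder Theorem. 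This buys a real structural gain: the parity of $V_1$ depends only on whether $1/2$ satisfies the equation, i.e., on $2^{d-1}=1$, while the residue mod $3$ depends only on how many roots $X^2-X+1$ has in $F$, i.e., on $\card{F} \bmod 3$ --- so the two hypotheses of the proposition are decoupled at the source, you never need to test $-1$ and $2$ individually (they are fixed points of the other two transpositions, which your argument never consults), and the degenerate coincidences in small characteristic (such as $-1=2=1/2$ when $p=3$) are absorbed automatically rather than case-split. What the paper's enumeration buys in exchange is slightly more information: it identifies exactly which exceptional points are roots, not merely counts modulo $2$ and $3$, though only the congruences are used downstream. One caveat, shared with the paper itself: in characteristic $2$ your formula $\card{\mathrm{Fix}(\sigma)} = 1 + [\,2^{d-1}=1\,]$ must be read with the convention of Remark \ref{Caesar} that $2=0$ in $F$ forces $2^{d-1}\neq 1$; as you observe, $\sigma(v)=1+v$ then fixes only $\infty$, which is what actually feeds the parity count.
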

Before proving the proposition, we interject two remarks.
\begin{remark}\label{Caesar}
When $F$ is of order $2^n$, then we have $2=0$ in $F$, so that $2^{d-1}\not=1$, and we recover the result of McGuire (cf.~\cite[Corollary 1]{McGuire}) that $V_1 \equiv 4 \pmod{6}$ if $n$ is even, and $V_1 \equiv 2 \pmod{6}$ if $n$ is odd.
\end{remark}
\begin{remark}
When we are dealing with three-valued Weil sums, Theorem \ref{Imogene} shows that $d\equiv 1 \pmod{p-1}$, in which case $2^{d-1}=1$ in $F$ by Fermat's Little Theorem whenever $p\not=2$.
\end{remark}
\noindent {\it Proof of Proposition \ref{Barbara}.}
We are counting modulo $6$ the roots of $f(X)=X^d+(1-X)^d-1$ in $F$.
First of all, note that $0$ and $1$ are always roots of $f$.
Consider the action of the two involutions $\sigma(x)=1-x$ and $\tau(x)=1/x$ on $F\smallsetminus\{0,1\}$.
Note that $f(\sigma(x))=f(x)$ and $f(\tau(x))=-f(x)/x^d$, because $\gcd(d,\card{\Fu})=1$ makes $d$ odd when $\card{F}$ is odd.
Thus $\sigma$ and $\tau$ map the roots of $f$ in $F\smallsetminus\{0,1\}$ to themselves.

For generic $F$, our $\sigma$ and $\tau$ generate a group $G$ isomorphic to $S_3$, consisting of the identity, $\sigma$, $\tau$, $\tau \circ \sigma(x)=1/(1-x)$, $\sigma \circ \tau(x)=(x-1)/x$, and $\sigma \circ \tau \circ \sigma(x)=\tau \circ \sigma \circ \tau(x)=x/(x-1)$.
The set of roots of $f$ in $F\smallsetminus\{0,1\}$ is preserved by $G$, so is partitioned into orbits under its action.

A $G$-orbit contains $6$ elements unless it contains a point fixed by some nonidentity element of $G$.
An $x \in F\smallsetminus\{0,1\}$ is fixed by a nonidentity element of $G$ if and only if it satisfies at least one of the following equations: $x=1-x$, $x=1/x$, $x=1/(1-x)$, $x=(x-1)/x$, or $x=x/(x-1)$.
Equivalently, $x$ equals at least one of: (i) $-1$, (ii) $2$, (iii) the multiplicative inverse of $2$ (which we shall call $1/2$ when it exists), or (iv) a root of $\Phi_6(X)=X^2-X+1$, the cyclotomic polynomial of index $6$.

If $\card{F}\equiv 0\pmod{3}$, then $-1=2=1/2$, which is the double root of $\Phi_6$.
Furthermore, $f(-1)=0$ since $d$ is odd.
Thus the roots of $f$ consist of $0$, $1$, $-1$, and orbits of size $6$, so that $V_1\equiv 3 \pmod{6}$.

If $F$ is of characteristic $2$, then (i) $-1=1$, (ii) $2=0$, and (iii) the multiplicative inverse of $2$ does not exist, so these are not points in $F\smallsetminus\{0,1\}$, and (iv) the roots of $\Phi_6$ are the primitive third roots of unity, $\omega$ and $\omega^{-1}=1-\omega$, which lie in $F$ if and only if $\card{F} \equiv 1 \pmod{3}$.
In this case $f(\omega)=f(\omega^{-1})=0$ since $d \equiv \pm 1 \pmod{3}$, because $\gcd(d,\card{\Fu})=1$.
So the roots of $f$ consist of $0$ and $1$ always, the third roots of unity if $\card{F}\equiv 1 \pmod{3}$, and orbits of size $6$.
Thus $V_1 \equiv 4 \pmod{6}$ if $\card{F} \equiv 1 \pmod{3}$ and $V_1 \equiv 2 \pmod{6}$ if $\card{F} \equiv 2 \pmod{3}$.

Now suppose that $F$ is of characteristic $p \geq 5$.
Then (i) $-1$, (ii) $2$, (iii) $1/2$, and (iv) the roots of $\Phi_6$ are five distinct elements in characteristic $p$.
The roots $\zeta$ and $\zeta^{-1}=1-\zeta$ of $\Phi_6$ lie in $F$ if and only if $\card{F} \equiv 1 \pmod{6}$, or equivalently, if and only if $\card{F} \equiv 1 \pmod{3}$.
In this case $f(\zeta)=f(\zeta^{-1})=0$ since $d \equiv \pm 1 \pmod{6}$ because $\gcd(d,\card{\Fu})=1$, so the roots of $\Phi_6$ are roots of $f$ if they are present in $F$.
The three elements $-1$, $2$, and $1/2$ make up a $G$-orbit, and are roots of $f$ if and only if $0=f(1/2)=(1/2)^d+(1/2)^d-1=(1/2)^{d-1}-1$, that is, if and only if $2^{d-1}=1$.
So the roots of $f$ consist of $0$ and $1$ always, the roots of $\Phi_6$ if and only if $\card{F}\equiv 1 \pmod{3}$, the elements $-1$, $2$, and $1/2$ if and only if $2^{d-1}=1$ in $F$, and orbits of size $6$.
Adding these counts together modulo $6$ according to the four possible cases finishes the proof.\hfill\qed

\section{Proof of Theorem \ref{Celine}}\label{Elizabeth}

Throughout this section, we assume that $F$ is a finite field of characteristic $p$ and order $q$, and that $d$ is a positive integer with $\gcd(d,q-1)=1$.
We let $\psi\colon F \to \C$ be the canonical additive character, and set
\[
\Wu=\Wfdu=\sum_{x \in F} \psi(x^d+ u x),
\]
and assume that $\Wfd$ is three-valued.
Per Theorem \ref{Imogene}, these three values must all be in $\Z$, and one of them must be $0$.
Call the other two values $a$ and $b$; these are of opposite sign since $\sum_{u \in \Fu} \Wu^2 = \left(\sum_{u \in \Fu} \Wu\right)^2$ by Corollary \ref{Orestes}.\ref{Katherine}--\ref{Laura}.
We calculate the power moments in this three-valued case.
\begin{lemma}\label{Theresa}
For any positive integer $k$, we have
\[
\sum_{u \in \Fu} \Wu^k = \frac{q^2(a^{k-1}-b^{k-1}) - q a b(a^{k-2}-b^{k-2})}{a-b}.
\]
\end{lemma}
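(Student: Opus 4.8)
The plan is to exploit the fact that $\Wu$ takes only the three values $0$, $a$, and $b$, so that the entire distribution of values is determined by the multiplicities with which $a$ and $b$ occur. Writing $n_a = \card{\{u \in \Fu : \Wu = a\}}$ and $n_b = \card{\{u \in \Fu : \Wu = b\}}$, and noting that the value $0$ contributes nothing to any positive power moment, one has
\[
\sum_{u \in \Fu} \Wu^k = n_a a^k + n_b b^k
\]
for every positive integer $k$. Thus the whole lemma reduces to determining $n_a$ and $n_b$ and then substituting.

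To find these two unknowns I would use the first two power moments, which are already in hand: Corollary \ref{Orestes}.\ref{Katherine} and Corollary \ref{Orestes}.\ref{Laura} give $n_a a + n_b b = q$ and $n_a a^2 + n_b b^2 = q^2$. This is a linear system in $n_a$ and $n_b$ whose coefficient determinant is $ab(a-b)$. Since the three values are distinct we have $a, b \neq 0$ and $a \neq b$ (indeed $a$ and $b$ are already known to be of opposite sign), so the system is nonsingular and I can solve it by ordinary elimination, obtaining
\[
n_a = \frac{q(q-b)}{a(a-b)}, \qquad n_b = \frac{q(a-q)}{b(a-b)}.
\]

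Substituting these back into $n_a a^k + n_b b^k$ and cancelling one factor of $a$ and one of $b$ against $a^k$ and $b^k$ gives
\[
\sum_{u \in \Fu} \Wu^k = \frac{q(q-b)\,a^{k-1} + q(a-q)\,b^{k-1}}{a-b},
\]
after which the only remaining task is to regroup the numerator. Separating the terms proportional to $q^2$ from those proportional to $ab$, one checks that $q(q-b)a^{k-1} + q(a-q)b^{k-1} = q^2(a^{k-1}-b^{k-1}) - qab(a^{k-2}-b^{k-2})$, which is exactly the claimed closed form. I do not expect a genuine obstacle here: all the arithmetic content is carried by the already-established first and second moment identities, and this last regrouping is purely algebraic. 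As a consistency check, the formula returns $q$ at $k=1$ and $q^2$ at $k=2$, in agreement with Corollary \ref{Orestes}.
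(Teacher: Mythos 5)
Your proof is correct, but it takes a genuinely different route from the paper's. The paper proves the $k=1,2$ cases via Corollary \ref{Orestes}.\ref{Katherine}--\ref{Laura} and then, for $k>2$, uses the annihilating identity $\Wu^{k-2}(\Wu-a)(\Wu-b)=0$ (valid because every value of $\Wfd$ lies in $\{0,a,b\}$) to obtain the linear recurrence $\sum_{u\in\Fu}\Wu^k=(a+b)\sum_{u\in\Fu}\Wu^{k-1}-ab\sum_{u\in\Fu}\Wu^{k-2}$, from which the closed form follows by induction. You instead compute the value distribution explicitly: the same two base moments give a $2\times 2$ linear system for the multiplicities $n_a$ and $n_b$, which is nonsingular precisely because $a$ and $b$ are distinct and nonzero (both facts are in hand from Theorem \ref{Imogene} and the three-valued hypothesis), and substitution into $n_a a^k+n_b b^k$ yields the formula. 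The two arguments consume exactly the same inputs, so neither is more general, but they buy different things: the paper's recurrence is shorter, stays in integer arithmetic throughout, and needs no case analysis, while your route produces the explicit multiplicities $n_a=q(q-b)/\bigl(a(a-b)\bigr)$ and $n_b=q(a-q)/\bigl(b(a-b)\bigr)$ as a byproduct --- and the requirement that these be nonnegative integers is itself a useful constraint on $a$ and $b$ in this subject. One cosmetic point for a final write-up: at $k=1$ the stated closed form involves $a^{-1}$ and $b^{-1}$, so the concluding regrouping identity should be read in $\Q$ (harmless, since $a,b$ are nonzero integers, and as you note the formula correctly returns $q$ there); the paper sidesteps this by treating $k=1,2$ as separate base cases.
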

\begin{proof}
The $k=1$ and $2$ cases are proved in Corollary \ref{Orestes}.\ref{Katherine}--\ref{Laura}.
For $k > 2$, we note that $\Wu^{k-2} (\Wu-a) (\Wu-b) = 0$ for all $u \in \Fu$, whence
\[
\sum_{u \in \Fu} \Wu^k = (a+b) \sum_{u \in \Fu} \Wu^{k-1} - a b \sum_{u \in \Fu} \Wu^{k-2},
\]
from which the identity for $\sum_{u \in \Fu} W_u^k$ follows by induction.
\end{proof}
We let $V$ be as defined in equation \eqref{Veronica}, and note the consequences that our power moment results have for the coefficients of $V$.
\begin{lemma}\label{Victor}
$V_1=a+b-\frac{a b}{q}$, hence $\val(a b) \geq \val(q)$, with strict inequality if $p=2$ or $3$.
\end{lemma}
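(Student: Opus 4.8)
The plan is to obtain the identity by computing the third power moment $\sum_{u \in \Fu} \Wu^3$ in two independent ways and equating them. On one hand, Corollary \ref{Orestes}.\ref{Mary} gives $\sum_{u \in \Fu} \Wu^3 = \card{F}^2 V_1 = q^2 V_1$. On the other hand, specializing Lemma \ref{Theresa} to $k=3$ gives $\sum_{u \in \Fu} \Wu^3 = \frac{q^2(a^2-b^2) - qab(a-b)}{a-b}$; here the common factor $a-b$ cancels (using $a^2 - b^2 = (a-b)(a+b)$), leaving $q^2(a+b) - qab$. Equating the two expressions and dividing by $q^2$ yields $V_1 = a + b - \frac{ab}{q}$, the first assertion of the lemma.

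For the non-strict divisibility, I would note that $V_1$ is a count and hence a rational integer, and that $a, b \in \Z$ by Theorem \ref{Imogene}, so that $a+b \in \Z$. The identity then forces $\frac{ab}{q} = (a+b) - V_1$ to be an integer, whence $q \mid ab$ and $\val(ab) \geq \val(q) = n$.

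The delicate point is the strict inequality for $p = 2$ and $3$, which I would establish by showing that $p$ divides the integer $\frac{ab}{q} = (a+b) - V_1$. This rests on two facts pulled from earlier. First, Theorem \ref{Gloria} gives $\val(a), \val(b) > 0$, and since $a$ and $b$ are rational integers this means $p \mid a$ and $p \mid b$, hence $p \mid (a+b)$. Second, the orbit-counting congruences of Section \ref{Bernard} give $p \mid V_1$: for $p = 3$ we have $q = 3^n \equiv 0 \pmod{3}$, so Proposition \ref{Barbara}.\ref{Herbert} applies unconditionally and yields $V_1 \equiv 3 \pmod 6$, whence $3 \mid V_1$; for $p = 2$, Remark \ref{Caesar} gives $V_1 \equiv 2$ or $4 \pmod 6$, so $2 \mid V_1$ for either parity of $n$. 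Combining, $\frac{ab}{q} = (a+b) - V_1 \equiv 0 \pmod p$, so $\val(ab) \geq n+1 > \val(q)$. I expect the main obstacle here to be conceptual rather than computational: recognizing that the modular arithmetic of the $S_3$-orbit count for $V_1$ must be paired with the elementary $p$-divisibility of the Weil sum values to gain the extra power of $p$, and checking that this gain really is special to $p = 2, 3$ (where $V_1$ is forced to be divisible by $p$) and need not persist for larger $p$.
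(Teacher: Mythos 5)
Your proof is correct and takes essentially the same route as the paper: the identity comes from equating Corollary \ref{Orestes}.\ref{Mary} with the simplified $k=3$ case of Lemma \ref{Theresa}, the inequality $\val(ab)\geq\val(q)$ from $V_1$ being an integer count with $a,b\in\Z$, and the strict inequality for $p=2,3$ from combining $p\mid a,b$ (Theorem \ref{Gloria}) with $p\mid V_1$ (Proposition \ref{Barbara}.\ref{Herbert} and Remark \ref{Caesar}). The only cosmetic difference is that the paper reaches $p \mid V_1$ by first invoking $d\equiv 1\pmod{p-1}$ from Theorem \ref{Imogene}, whereas you correctly observe this hypothesis is automatic for $p=2$ and $3$.
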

\begin{proof}
Compare Corollary \ref{Orestes}.\ref{Mary} and the $k=3$ case of Lemma \ref{Theresa} to deduce the equation.
Since $V_1=\card{\{v\in F : v^d+(1-v)^d=1\}}$, and since $a, b \in \Z$, we know that $a b/q$ lies in $\Z$, so that $\val(a b) \geq \val(q)$.
Suppose that $F$ is of characteristic $p=2$ or $3$.
Since $\Wfd$ is three-valued, $d \equiv 1 \pmod{p-1}$ by Theorem \ref{Imogene}, and so by Proposition \ref{Barbara}.\ref{Herbert} and Remark \ref{Caesar}, we see that $V_1 \equiv 0 \pmod{p}$.
Furthermore, $a \equiv b \equiv 0 \pmod{p}$ by Theorem \ref{Gloria}.
Thus $a b/q \equiv 0 \pmod{p}$, so $\val(a b) > \val(q)$.
\end{proof}
\begin{lemma}\label{Wilbur}
We have
\begin{enumerate}[(i).]
\item\label{John} $\sum_{u \in \F\smallsetminus\{0,1\}} V_u = \frac{(q-a)(q-b)}{q} >0$, and
\item\label{Yolanda} $\sum_{u \in \F\smallsetminus\{0,1\}} V_u^2 = -\frac{a b (q-a)(q-b)}{q^2}>0$.
\end{enumerate}
\end{lemma}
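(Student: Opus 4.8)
The plan is to reduce both identities to the power-moment data already assembled, and then treat the positivity separately. The crucial preliminary observation is that $V$ lives in $\gr=\C[\Fu]$, so its coefficients are indexed by $\Fu$ and none sits at $0$; hence $\F\smallsetminus\{0,1\}$ meets the support of $V$ in precisely $\Fu\smallsetminus\{1\}$. Using $\card{V}=q$ from \eqref{Fred}, this gives
\[
\sum_{u \in \F\smallsetminus\{0,1\}} V_u = \card{V} - V_1 = q - V_1,
\qquad
\sum_{u \in \F\smallsetminus\{0,1\}} V_u^2 = \Big(\sum_{u \in \Fu} V_u^2\Big) - V_1^2 .
\]
For \ref{John} I would substitute $V_1=a+b-ab/q$ from Lemma \ref{Victor} and clear denominators: $q-V_1=(q^2-(a+b)q+ab)/q=(q-a)(q-b)/q$, which is the claimed value.

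For \ref{Yolanda} I would first evaluate $\sum_{u\in\Fu}V_u^2$ by combining Corollary \ref{Orestes}.\ref{Natasha}, which gives $\sum_{u\in\Fu}V_u^2=q^{-2}\sum_{u\in\Fu}\Wu^4$, with the $k=4$ instance of Lemma \ref{Theresa}. Using $(a^3-b^3)/(a-b)=a^2+ab+b^2$ and $(a^2-b^2)/(a-b)=a+b$, the fourth moment collapses to $q^2(a^2+ab+b^2)-qab(a+b)$, so $\sum_{u\in\Fu}V_u^2=(a^2+ab+b^2)-ab(a+b)/q$. Subtracting $V_1^2=(a+b-ab/q)^2$ and simplifying (writing $a^2+ab+b^2=(a+b)^2-ab$ so that the $(a+b)^2$ terms cancel) leaves $-ab\big(q^2-(a+b)q+ab\big)/q^2=-ab(q-a)(q-b)/q^2$, as required. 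This is the routine algebraic core of the lemma.

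The part I expect to be the real obstacle is the strict positivity. Both strict inequalities reduce to showing $(q-a)(q-b)>0$, since $-ab>0$ once I know that $a$ and $b$ have opposite signs, which was established just before the lemma from $\sum_{u\in\Fu}\Wu^2=\big(\sum_{u\in\Fu}\Wu\big)^2$. To fix the signs of $q-a$ and $q-b$ I would show that $\card{\Wu}<q$ for every $u$. Indeed, $\card{\Wu}=q$ would force all $q$ unit-modulus summands $\psi(x^d+ux)$ to coincide, hence (evaluating at $x=0$, where the summand is $1$) to equal $1$, making $\Wu=q$; but then the second moment $\sum_{u\in\Fu}\Wu^2=q^2$ forces this value to be attained exactly once with no other nonzero value occurring, contradicting three-valuedness. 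Thus $\card{a},\card{b}<q$, so with $a>0>b$ (say) both $q-a$ and $q-b$ are positive and $(q-a)(q-b)>0$, yielding both strict inequalities.

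As a cross-check, and an alternative route to strictness, each $V_u$ is a nonnegative integer, so $q-V_1=\sum_{u\neq1}V_u\ge 0$ with equality only if $V$ is supported entirely at $[1]$, i.e. $v^d+(1-v)^d=1$ for every $v\in\F$; this is the degenerate situation excluded under the three-valued hypothesis. The same observation simultaneously handles \ref{Yolanda}, since $\sum_{u\neq1}V_u^2>0$ holds exactly when some $V_u$ with $u\neq1$ is nonzero, i.e. under the identical condition.
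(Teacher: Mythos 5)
Your proposal is correct and, for the two identities, follows essentially the same route as the paper: both rest on $\card{V}=q$ from \eqref{Fred}, the value $V_1=a+b-\frac{ab}{q}$ from Lemma \ref{Victor}, and the comparison of Corollary \ref{Orestes}.\ref{Natasha} with the $k=4$ case of Lemma \ref{Theresa}, with identical algebra. The one place you diverge is the strict positivity: the paper disposes of it in a single line, noting that $q^2=\sum_{u\in\Fu}\Wu^2\geq a^2+b^2$ by Corollary \ref{Orestes}.\ref{Laura} (both values being attained and nonzero), so $\card{a},\card{b}<q$ and $(q-a)(q-b)>0$; your equality-case analysis of the triangle inequality reaches the same bound $\card{\Wu}<q$ and is valid, just longer than necessary given that the second moment was already in hand. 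Your nonnegativity cross-check is also sound in spirit, but the assertion that $V$ supported entirely at $[1]$ is ``the degenerate situation excluded under the three-valued hypothesis'' deserves its one-line justification: $V=q[1]$ would give $X=WV=qW$ by Lemma \ref{Ursula}, hence $\Wu^2=q\Wu$ for all $u$, so $\Wfd$ would take only the values $0$ and $q$, contradicting three-valuedness.
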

\begin{proof}
For part \eqref{John}, we have $\sum_{u \in \F\smallsetminus\{0,1\}} V_u=\card{V}-V_1$, and then use equation \eqref{Fred} and the value of $V_1$ from Lemma \ref{Victor}.  For part \eqref{Yolanda}, compare Corollary \ref{Orestes}.\ref{Natasha} and the $k=4$ case of Lemma \ref{Theresa} to see that
\[
q^2 \sum_{u \in \Fu} V_u^2 = q^2(a^2+a b+b^2) - q a b (a+b), 
\]
and then substitute the value of $V_1$ from Lemma \ref{Victor} and rearrange.
Both our sums are strictly positive since $|a|, |b| < q$, inasmuch as Corollary \ref{Orestes}.\ref{Laura} shows that $q^2=\sum_{u \in \Fu} \Wu^2$, which must be at least $a^2+b^2$.
\end{proof}
Now write $|a|=a_o a_p$, $|b|=b_o b_p$, and $|a-b|=(a-b)_o (a-b)_p$, where  $a_p$, $b_p$, and $(a-b)_p$ are powers of $p$ and $a_o$, $b_o$, and $(a-b)_o$ are positive integers coprime to $p$.
So $\gcd(a_o,b_o)$ is coprime to $p$, and yet also a divisor of every $\Wu$, hence of $\card{W}=q$ (see Corollary \ref{Orestes}.\ref{Katherine}), and so $\gcd(a_o,b_o)=1$.
Thus $a_o$, $b_o$, and $(a-b)_o$ are pairwise coprime, and now we show that they divide most of the coefficients of $V$.
\begin{lemma}\label{Zachary}
$a_0 b_o (a-b)_0 \mid V_u$ for all $u\in \Fu$ with $u\not=1$.
\end{lemma}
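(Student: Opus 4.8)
The plan is to first obtain a clean closed form for $V_u$ when $u\neq 1$, then prove divisibility by $a_o b_o$ and by $(a-b)_o$ through two different mechanisms, and finally combine them using the pairwise coprimality of $a_o$, $b_o$, $(a-b)_o$ established above.

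First I would introduce the level sets $Z=\{u\in\Fu:\Wu=0\}$, $A=\{u\in\Fu:\Wu=a\}$, and $B=\{u\in\Fu:\Wu=b\}$, identified with their group algebra elements, so that $W=aA+bB$, $X=a^2A+b^2B$, and $\Fu=A+B+Z$. A one-line check gives the identity $X=(a+b)W-ab\,\Fu+ab\,Z$. By Corollary \ref{William} we have $W\Wc=q^2[1]$, so $W$ is invertible with $W^{-1}=\Wc/q^2$ and $\card{W^{-1}}=1/q$; applying $W^{-1}$ to the previous identity and using $S\Fu=\card{S}\,\Fu$ gives $V=(a+b)[1]-\frac{ab}{q}\Fu+ab\,W^{-1}Z$. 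Reading off the $[u]$-coefficient for $u\neq 1$ and expanding $W^{-1}Z=\Wc Z/q^2$ yields
\[
V_u=\frac{ab}{q^2}\Bigl(\sum_{z\in Z}W_{z/u}-q\Bigr)=-\frac{ab}{q^2}\sum_{z\in A\cup B}W_{z/u},
\]
where the last equality uses $\sum_{z\in\Fu}W_{z/u}=\card{W}=q$.

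Divisibility by $a_o b_o$ is then immediate: the display gives $q^2V_u=-ab\sum_{z\in A\cup B}W_{z/u}$, and since $a_ob_o\mid ab$ while $\gcd(a_ob_o,q)=1$, we conclude $a_ob_o\mid V_u$.

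The main work, and the only genuine obstacle, is divisibility by $(a-b)_o$. Since $(a-b)_o$ is coprime to $p$ and to $a_ob_o$, we have $\gcd(ab,(a-b)_o)=\gcd(q,(a-b)_o)=1$, so it suffices to show $(a-b)_o\mid\sum_{z\in A\cup B}W_{z/u}$. Modulo $(a-b)_o$ the two nonzero values coincide, $a\equiv b$; writing $c$ for this common residue, which is a unit because $\gcd(a,(a-b)_o)=1$, each term $W_{z/u}$ reduces to $c$ or $0$ according as $z/u\in A\cup B$ or not, so $\sum_{z\in A\cup B}W_{z/u}\equiv c\,N(u)\pmod{(a-b)_o}$, where $N(u)=\card{\{z\in A\cup B:z/u\in A\cup B\}}$. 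It remains to prove $(a-b)_o\mid N(u)$ for $u\neq 1$, and here I would invoke Corollary \ref{William} once more: the $[u]$-coefficient of $W\Wc=q^2[1]$ is the autocorrelation $\sum_{w\in\Fu}W_w W_{uw}$, which vanishes for every $u\neq 1$. The substitution $w=z/u$ identifies $N(u)$ with $\card{\{w\in A\cup B:uw\in A\cup B\}}$, which is exactly the set of indices $w$ on which $W_wW_{uw}\neq 0$; reducing the vanishing autocorrelation modulo $(a-b)_o$ gives $c^2N(u)\equiv 0$, and cancelling the unit $c^2$ yields $(a-b)_o\mid N(u)$. Combining this with the previous divisibility and the pairwise coprimality gives $a_ob_o(a-b)_o\mid V_u$.
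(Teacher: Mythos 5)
Your proof is correct, and it rests on the same two pillars as the paper's --- Lemma \ref{Ursula} ($X=WV$) and Corollary \ref{William} ($W\Wc=q^2[1]$) --- but it organizes the divisibility argument quite differently. The paper never inverts $W$ or writes a closed form for $V_u$; instead it solves the system $W=aA+bB$, $X=a^2A+b^2B$ for the level-set indicator $A$, obtaining $a(a-b)A=W(V-b)$, and then multiplies by $\Wc$ to get $a(a-b)A\Wc=q^2(V-b)$. Since $A\Wc$ has integer coefficients, the factor $a(a-b)$ sitting on the left \emph{automatically} forces $a_o(a-b)_o\mid V_u$ for $u\neq 1$, and symmetrically $b_o(a-b)_o\mid V_u$; pairwise coprimality finishes it. In your version the factor $(a-b)$ never appears algebraically, so you must recover $(a-b)_o$-divisibility by hand, reducing the identity $q^2V_u=-ab\sum_{z\in A\cup B}\Wu[z/u]$-style sum modulo $(a-b)_o$ and invoking the vanishing autocorrelation $(W\Wc)_u=0$ a second time to kill the count $N(u)$ --- a step which is, in effect, an elementwise unpacking of what the paper gets in one stroke from the integrality of $A\Wc$. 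The trade-off: your route is longer and splits one clean divisibility into two mechanisms, but it yields the explicit and potentially independently useful formula $V_u=-\tfrac{ab}{q^2}\sum_{z\in A\cup B}W_{z/u}$ for $u\neq 1$, whereas the paper's isolation of $A$ is slicker precisely because the quantity $a-b$ is built into the resolvent $X-bW=a(a-b)A$. All the supporting details in your argument check out: $A$ and $B$ are disjoint, $\gcd(ab\,q,(a-b)_o)=1$ follows from the pairwise coprimality already established, $c$ is a unit modulo $(a-b)_o$, and the integrality of $V_u$ (a point count) lets you cancel $q^2$.
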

\begin{proof}
Recall the definitions of $W$ and $X$ in equations \eqref{Winston} and \eqref{Xavier}.  
Let $A=\{u \in \Fu: \Wu=a\}$ and $B=\{u \in \Fu: \Wu=b\}$, so that $W = a A + b B$ and $X=a^2 A + b^2 B$.
Since $X=W V$ by Lemma \ref{Ursula}, we solve for $A$ to get $a(a-b) A = W (V-b)$, then multiply both sides by $\Wc$ and apply Corollary \ref{William} to get $a(a-b) A \Wc = q^2 (V-b)$.
Since $A$ and $\Wc$ have integer coefficients, $a_o(a-b)_o$ divides all the coefficients of $V-b$, in particular, all $V_u$ with $u\not=1$.
By the same method, we deduce that $b_o(a-b)_o$ divides all $V_u$ with $u\not=1$, and recall that $a_o$, $b_o$, and $(a-b)_o$ are pairwise coprime.
\end{proof}
Now we recall and prove our main result, Theorem \ref{Celine}.
\begin{theorem}
If $q=p^n$, then one of the following holds:
\begin{enumerate}[(i).]
\item $\val(a), \val(b) > n/2$; or
\item\label{Benjamin} $\val(a)=\val(b)=n/2$, and $|a-b|$ is a power of $p$ with $|a-b| > \sqrt{q}$;
\end{enumerate}
and case \eqref{Benjamin} cannot occur if $p=2$ or $3$.
\end{theorem}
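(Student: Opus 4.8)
The plan is to leverage a feature of the coefficients $V_u$ that has not yet been used: each $V_u=\card{\{v\in F:(v^d+(1-v)^d)^{1/d}=u\}}$ is a \emph{non-negative integer}. Combining this with the divisibility from Lemma~\ref{Zachary}, I would proceed as follows. Write $D=a_o b_o (a-b)_o$, so that $D\mid V_u$ for every $u\in\Fu$ with $u\neq 1$, and set $M=\sum_{u\in\F\smallsetminus\{0,1\}}V_u$ and $N=\sum_{u\in\F\smallsetminus\{0,1\}}V_u^2$. For each such $u$ write $V_u=D k_u$ with $k_u$ a non-negative integer; then $M=D\sum_u k_u$ and $N=D^2\sum_u k_u^2$. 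Since $k_u^2\geq k_u$ for every non-negative integer $k_u$, summing gives $\sum_u k_u^2\geq\sum_u k_u$, and hence the clean inequality $N\geq D M$. I expect this elementary observation to be the main obstacle: the bound $\val(ab)\geq n$ from Lemma~\ref{Victor} is by itself far too weak to force the sharp dichotomy, and the whole argument hinges on extracting this inequality from the interplay of non-negativity and divisibility.

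Next I would feed in the explicit values of $M$ and $N$ from Lemma~\ref{Wilbur}. Because $a$ and $b$ are of opposite sign, $ab<0$, and the two formulas there give $N=-\tfrac{ab}{q}M=\tfrac{|ab|}{q}M$ with $M>0$. Combining this with $N\geq DM$ and dividing through by $M$ yields $|ab|/q\geq D$. Writing $|ab|=a_o b_o\,p^{\val(a)+\val(b)}$ and $q=p^n$, the factors $a_o b_o$ cancel and I obtain the key bound
\[
(a-b)_o \;\leq\; p^{\,\val(a)+\val(b)-n},
\]
whose exponent is nonnegative by Lemma~\ref{Victor}, which supplies $\val(a)+\val(b)=\val(ab)\geq n$.

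Now I would carry out the dichotomy. Suppose case (i) fails, so $\min(\val(a),\val(b))\leq n/2$; by the $a\leftrightarrow b$ symmetry assume $\val(a)\leq\val(b)$, whence $\val(a)\leq n/2\leq\val(b)$ using $\val(a)+\val(b)\geq n$. If $\val(a)<\val(b)$, then $\val(a-b)=\val(a)$ and $(a-b)_o=a_o+b_o\,p^{\val(b)-\val(a)}>p^{\val(b)-\val(a)}$; the key bound then forces $\val(b)-\val(a)<\val(a)+\val(b)-n$, i.e.\ $\val(a)>n/2$, contradicting $\val(a)\leq n/2$. Hence $\val(a)=\val(b)$, and then $\val(a)+\val(b)\geq n$ together with $\val(a)\leq n/2$ pins down $\val(a)=\val(b)=n/2$. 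The key bound now reads $(a-b)_o\leq p^0=1$, so $(a-b)_o=1$, meaning $|a-b|$ is a power of $p$. Finally $|a-b|=|a|+|b|=(a_o+b_o)p^{n/2}$ is a power of $p$, so $a_o+b_o$ is a power of $p$; being at least $2$ it is at least $p$, giving $|a-b|\geq p^{\,n/2+1}>\sqrt q$. This is precisely case (ii).

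It remains to exclude case (ii) when $p=2$ or $3$. There $\val(a)=\val(b)=n/2$ forces $\val(ab)=n=\val(q)$, but Lemma~\ref{Victor} asserts $\val(ab)>\val(q)$ in characteristic $2$ and $3$; this contradiction shows case (ii) cannot occur, so case (i) must hold. I expect the bookkeeping with $p$-adic valuations to be routine once the inequality $N\geq DM$ is in hand, so the only delicate point is that non-negativity of the $V_u$ is genuinely needed — the divisibility alone would not yield $N\geq DM$.
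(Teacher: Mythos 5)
Your proposal is correct and follows essentially the same route as the paper: the central step in both is the inequality $-ab/q \geq a_o b_o (a-b)_o$ (your $N \geq DM$ divided by $M$), obtained exactly as you do it, from the nonnegativity of the integers $V_u$ together with the divisibility of Lemma \ref{Zachary} and the sums of Lemma \ref{Wilbur}, with Lemma \ref{Victor} supplying $\val(ab)\geq n$ (strict for $p=2,3$) to run the dichotomy and exclude case (ii) in characteristic $2$ and $3$. Your case analysis is organized contrapositively by valuations rather than by comparing $a_p$ with $b_p$ as the paper does, but this is only a cosmetic rearrangement of the same argument.
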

\begin{proof}
By Lemma \ref{Zachary}, $V_u^2 \geq a_o b_o (a-b)_o V_u$ for each $u\not=0,1$.  Thus 
\[
\sum_{u \in \F\smallsetminus\{0,1\}} V_u^2 \geq a_o b_o (a-b)_o \sum_{u \in \F\smallsetminus\{0,1\}} V_u,
\]
so by Lemma \ref{Wilbur} we can divide by the sum on the right to obtain $-a b/q \geq a_o b_o (a-b)_o$, which yields
\begin{equation}\label{Alexandra}
a_p b_p \geq q (a-b)_o.
\end{equation}

If $a_p\not=b_p$, set $\{g,h\}=\{a_p,b_p\}$ with $g < h$.
Then $(a-b)_p=g$, so $(a-b)_o=|a-b|/g$, and thus \eqref{Alexandra} becomes $g^2 h \geq q |a-b|$.
Now $|a-b| > \max\{|a|,|b|\} \geq h$, so that $g^2 > q$, and so $a_p, b_p > \sqrt{q}$, and hence $\val(a), \val(b) > n/2$.

If $a_p=b_p$, then $\val(a b)\geq\val(q)$ by Lemma \ref{Victor}.
If this inequality is strict, which it must be if $p=2$ or $3$, then $\val(a)=\val(b) > n/2$.

So it remains to consider the case where $p\geq 5$ and $\val(a)=\val(b)=n/2$.
Then $a_p b_p=q$, so \eqref{Alexandra} forces $(a-b)_0=1$, hence $|a-b|$ must be a power of $p$, and indeed must be greater than $\sqrt{q}$ since $|a-b| > |a|$ and $a$ is a nonzero integral multiple of $\sqrt{q}$.
\end{proof}
We conclude with a remark about what happens when we are in case \eqref{Benjamin} of our theorem, and what that tells us about possible counterexamples to Conjecture \ref{Herman}.
\begin{remark}
In case \eqref{Benjamin} of our theorem, where $\val(a)=\val(b)=n/2$, we have $-a b/q=a_o b_o$, so that $V_1=a+b+a_o b_o$ by Lemma \ref{Victor}, and $\sum_{v \in F\smallsetminus\{0,1\}} V_u(V_u-a_o b_o) = 0$ by Lemma \ref{Wilbur}.
Since $V_u$ is equal to the count $\card{\{v \in F : v^d+(1-v)^d=u^d\}}$, and since Lemma \ref{Zachary} tells us that $a_o b_o \mid V_u$ for all $u \in F\smallsetminus\{0,1\}$, we see that $V_u \in \{0,a_o b_o\}$ for all $u \in F\smallsetminus\{0,1\}$.  Furthermore, $|a-b|$ is a power of $p$ greater than $\sqrt{q}$, so that $a_o+b_o$ is also a power of $p$, and $a_o+b_o \geq p \geq 5$, and so $a_o b_o \geq 4$.

Theorem \ref{Dorothy} forces us into this $\val(a)=\val(b)=n/2$ case when $n$ is a power of $2$.
So if there is a counterexample to Conjecture \ref{Herman}, the field $F$ and exponent $d$ must have the property that $v^d+(1-v)^d$ represents $1$ for precisely $a+b+a_o b_o$ values of $v \in F$, and it represents $\frac{q-a-b-a_o b_o}{a_o b_o}$ elements of $F\smallsetminus\{0,1\}$ for precisely $a_o b_o$ values of $v \in F$ each, and it does not represent any other element of $F$. 
\end{remark}

\section{Proof of Theorem \ref{Dorothy}}\label{Felicity}

Throughout this section, we use the definition of $\Wfdu$ from \eqref{Lawrence}.
We prove Theorem \ref{Dorothy} using two results of Aubry, Katz, and Langevin \cite{Aubry-Katz-Langevin}.
The first key result is Corollary 4.2 of that paper.
\begin{proposition}[Aubry-Katz-Langevin, 2013] \label{George}
Let $K$ be a finite field and $L$ an extension of $K$ of finite degree, and suppose that $d$ is a positive integer with $\gcd(d,\card{L^*})=1$.
Then
\[
\min_{u \in L^*} \val(W_{L,d}(u)) \leq [L:K] \cdot \min_{u \in K^*} \val(W_{K,d}(u)).
\]
\end{proposition}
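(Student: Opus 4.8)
The plan is to reduce the statement, via Fourier duality, to the multiplicativity of Gauss-sum valuations under field extension, which is exactly the content of the Hasse--Davenport lifting relation. First I would record, for any finite field $F$ of order $q=p^r$ and the element $W=\sum_{u\in F^*}W_{F,d}(u)[u]$ of Section~\ref{Albert}, the identity
\[
\min_{u \in F^*}\val(W_{F,d}(u)) = \min_{\chi \in \ft{F^*}}\val(\chi(W)).
\]
This holds because the Fourier coefficient $\chi(W)=\sum_u W_{F,d}(u)\chi(u)$ and the inverse transform \eqref{Oswald} each express one side as a $\C$-linear combination of the other with coefficients among $\frac{1}{q-1}$ and the $(q-1)$-th roots of unity $\chi(u)$, all of which are $p$-adic units; so the two minima dominate each other. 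By Lemma~\ref{Aaron} and the rule $\chi(S^{(t)})=\chi^t(S)$, every non-principal $\chi$ satisfies $\chi(W)=\chi(\Psi)\,\chi^{-1/d}(\Psi)$, a product of the two Gauss sums $g_F(\chi)=\chi(\Psi)$ and $g_F(\chi^{-1/d})$ (Lemma~\ref{Eric}), while $\chi_0(W)=\card{W}=q$ by Corollary~\ref{Orestes}.\ref{Katherine}, of valuation $r$.

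Next, set $m=[L:K]$, write $W_K,W_L$ for the corresponding group-algebra elements, and pick a character $\chi^*$ of $K^*$ attaining $\mu_K:=\min_{u\in K^*}\val(W_{K,d}(u))$. I would lift it to $\chi'=\chi^*\circ N_{L/K}$ on $L^*$ and show $\val(\chi'(W_L))=m\,\mu_K$; since the left-hand side of the proposition equals $\min_{\chi}\val(\chi(W_L))\le\val(\chi'(W_L))$ by the first paragraph, this is exactly the claimed inequality. If $\chi^*$ is principal, then so is $\chi'$ and the claim reduces to $\log_p\card{L}=m\log_p\card{K}$. If $\chi^*$ is non-principal, then $\chi'$ is non-principal, and I would invoke the Hasse--Davenport relation $-g_L(\chi^*\circ N_{L/K})=(-g_K(\chi^*))^m$, which on $p$-adic valuations reads $\val(g_L(\chi'))=m\,\val(g_K(\chi^*))$ (the signs being units, and the canonical character of $L$ being $\psi_K\circ\Tr_{L/K}$). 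Applying this to \emph{both} Gauss-sum factors of $\chi'(W_L)$ gives
\[
\val(\chi'(W_L))=m\bigl(\val(g_K(\chi^*))+\val(g_K((\chi^*)^{-1/d}))\bigr)=m\,\val(\chi^*(W_K))=m\,\mu_K.
\]

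The step I expect to be the main obstacle is verifying that lifting commutes with the twist $\chi\mapsto\chi^{-1/d}$, so that the second Gauss-sum factor of $\chi'(W_L)$ really is $g_L$ of the lift of $(\chi^*)^{-1/d}$. The difficulty is purely arithmetic: the inverse of $d$ is taken modulo $\card{K}-1$ on the base but modulo $\card{L}-1$ on the extension, and the two must be reconciled. This works because $\card{K}-1$ divides $\card{L}-1$, so the inverse of $d$ modulo $\card{L}-1$ reduces to its inverse modulo $\card{K}-1$; combined with the fact that lifting multiplies Teichm\"uller exponents by $(\card{L}-1)/(\card{K}-1)$, a short congruence identifies $(\chi')^{-1/d}$ with $((\chi^*)^{-1/d})\circ N_{L/K}$. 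A secondary point is the justification of the Fourier duality at $p$, which needs only that $q-1$ and the $(q-1)$-th roots of unity are $p$-adic units. Finally, I note that Hasse--Davenport can be replaced throughout by Stickelberger's congruence: if $\chi^*=\omega_K^a$, its lift has exponent $a(\card{L}-1)/(\card{K}-1)$, whose base-$p$ digits split into $m$ disjoint length-$[K:\Fp]$ blocks, so every relevant digit sum---and hence every Gauss-sum valuation---is multiplied by exactly $m$.
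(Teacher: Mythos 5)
Your argument is correct, but there is no internal proof to compare it with: the paper states Proposition \ref{George} as a quoted result, imported from \cite{Aubry-Katz-Langevin} (their Corollary 4.2) without proof. What you have produced is, in essence, a faithful reconstruction of the method of that cited source: pass from the $u$-minimum to the $\chi$-minimum over Fourier coefficients, factor $\chi(W)=\chi(\Psi)\,\chi^{-1/d}(\Psi)$ for non-principal $\chi$ via Lemma \ref{Aaron} and the rule $\chi(S^{(t)})=\chi^t(S)$, and obtain multiplicativity of Gauss-sum valuations under the norm lift from Hasse--Davenport, or equivalently from Stickelberger together with the base-$p$ digit replication in $a\mapsto a\cdot\card{L^*}/\card{K^*}$. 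You correctly identified and resolved the one genuinely delicate point: that $(\chi^*\circ N_{L/K})^{-1/d}=\bigl((\chi^*)^{-1/d}\bigr)\circ N_{L/K}$ even though the inverse of $d$ is computed modulo $\card{K^*}$ on one side and modulo $\card{L^*}$ on the other; this works exactly as you say, because $\card{K^*}\mid\card{L^*}$ and the order of $\chi^*$ divides $\card{K^*}$. Your handling of the principal case and of the sign in Hasse--Davenport (a unit, hence invisible to $\val$) is also right, and note that your two-sided ``unit coefficients'' argument is more than you need: only the direct-transform direction $\val(\chi'(W_L))\geq\min_{u\in L^*}\val(W_{L,d}(u))$ enters the final inequality.

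Two small points of hygiene are worth making explicit. First, the Gauss sums live in $\Q(e^{2\pi i/p}, e^{2\pi i/\card{L^*}})$, so you should fix one place above $p$ of that field extending the paper's $\val$ on $\Q(e^{2\pi i/p})$; since $p$ is totally ramified in $\Q(e^{2\pi i/p})$, the valuations of the Weil sums themselves are unambiguous, the roots of unity of order $\card{L^*}$ are units at any such place, and the Hasse--Davenport identity transfers valuations for whichever place is chosen. Second, the hypothesis only gives $\gcd(d,\card{L^*})=1$, so you should record that $\gcd(d,\card{K^*})=1$ follows from $\card{K^*}\mid\card{L^*}$, which is what makes $W_{K,d}$, and the character $(\chi^*)^{-1/d}$ on $K^*$, well defined. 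With these remarks added, your write-up would make the paper self-contained at this point, using only the Section \ref{Albert} machinery (Lemma \ref{Aaron}, Corollary \ref{Orestes}.\ref{Katherine}) plus the two classical Gauss-sum facts.
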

The second result is part of Corollary 4.4 of the same paper.
\begin{proposition}[Aubry-Katz-Langevin, 2013]\label{Henry}
Let $K$ be a finite field of characteristic $p$, and let $L$ be the quadratic extension of $K$.
Let $d$ be a positive integer with $\gcd(d,\card{L^*})=1$, and suppose that $d$ is degenerate over $K$ but not over $L$.
Then
\[
\min_{u \in L^*} \val(W_{L,d}(u)) = [K:\Fp].
\]
\end{proposition}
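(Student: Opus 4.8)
The plan is to compute $\min_{u \in L^*}\val(W_{L,d}(u))$ exactly through the Gauss-sum expansion of the Weil sum together with Stickelberger's congruence, thereby reducing the whole statement to a combinatorial minimization of base-$p$ digit sums. Write $\card{L}=q=p^{2m}$ with $m=[K:\Fp]$, let $\omega$ be the Teichm\"uller character of $L^*$, and let $\mathfrak{p}$ be a prime above $p$ in $\Q(e^{2\pi i/p},\omega)$, normalized so that $\val_{\mathfrak p}(p)=p-1$ and $\val_{\mathfrak p}=(p-1)\val$ on $\Q(e^{2\pi i /p})$. Exactly as in Lemma~\ref{Aaron} and the inverse transform \eqref{Oswald}, for $u\neq 0$ one has $(q-1)W_{L,d}(u)=q+\sum_{a=1}^{q-2}g(\omega^{-a})\,g(\omega^{a/d})\,\omega^{a}(u)$, where $g$ is the Gauss sum and $a/d$ is read modulo $q-1$. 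By Stickelberger's theorem $\val_{\mathfrak p}(g(\omega^{-a}))=s(a)$, the sum of the base-$p$ digits of the residue of $a$ in $[0,q-2]$, so the $a$-th term has $\val_{\mathfrak p}$ equal to $s(a)+s(b)$ with $b\equiv -a/d\pmod{q-1}$, i.e. $a\equiv -db$. I would first record that this reduces everything to the digit-sum quantity
\[
\mu=\min_{1\le b\le q-2}\bigl(s(\langle -db\rangle)+s(b)\bigr),
\]
where $\langle\cdot\rangle$ denotes the residue in $[0,q-2]$; the goal becomes $\mu=m(p-1)$.

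The step that upgrades the inequality $\val_{\mathfrak p}(W_{L,d}(u))\ge\mu$ to the equality $\min_u\val_{\mathfrak p}(W_{L,d}(u))=\mu$ is linear independence of characters: dividing $(q-1)W_{L,d}(u)$ by a uniformizer to the power $\mu$ and reducing modulo $\mathfrak p$ leaves a nonzero $\overline{\Fp}$-linear combination of the distinct characters $u\mapsto\overline{\omega^{a}(u)}$ indexed by those $a$ of minimal valuation (the term $q$ drops out since $\val_{\mathfrak p}(q)=2m(p-1)>\mu$). By Artin's theorem this combination is not identically zero on $L^*$, so some $u$ attains $\val_{\mathfrak p}=\mu$. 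It therefore remains only to show $\mu=m(p-1)$.

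For the digit-sum minimization I would exploit that $s(\langle -p^ic\rangle)=s(\langle -c\rangle)$ for all $i$, so $\mu$ is unchanged when $d$ is replaced by $p^id\bmod(q-1)$; since $d$ is degenerate over $K$ we may assume $d\equiv 1\pmod{p^m-1}$, whence $d\equiv 1+t(p^m-1)\pmod{q-1}$ for a unique $t$, with non-degeneracy over $L$ forcing $2\le t\le p^m$. For the lower bound I would use the folding inequality $s(c)\ge s_K(\langle c\rangle_{p^m-1})$, where $s_K$ is the digit sum on $[0,p^m-2]$: reducing modulo $p^m-1$ collapses the two $m$-digit halves of $c$ by an addition, which cannot increase the digit sum. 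Because $d\equiv 1\pmod{p^m-1}$, the relation $a\equiv -db$ gives $a\equiv -b\pmod{p^m-1}$, so for $b\not\equiv 0\pmod{p^m-1}$ the two $K$-residues are complementary and $s_K(\langle a\rangle_{p^m-1})+s_K(\langle b\rangle_{p^m-1})=m(p-1)$, whence $s(a)+s(b)\ge m(p-1)$; the remaining $b\equiv 0\pmod{p^m-1}$ are handled directly, since every nonzero multiple of $p^m-1$ has $s$-value exactly $m(p-1)$, giving $\ge m(p-1)$ again. For the upper bound I would take $b=1$, compute $\langle -d\rangle=(p^m-t)p^m+(t-2)$, and verify by complementation that $s(\langle -d\rangle)+s(1)=m(p-1)$ whenever the last base-$p$ digit of $p^m-t$ is not $p-1$; choosing the representative of the $p$-coset with $t\not\equiv 1\pmod p$ (one of $t$ and $\langle 1-t\rangle_{p^m+1}$ always qualifies, and here $t\ge 2$ is precisely non-degeneracy over $L$) secures $\mu\le m(p-1)$.

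The main obstacle is this digit-sum bookkeeping, and in particular locating exactly where each hypothesis enters: degeneracy over $K$ is what pins $d\equiv 1\pmod{p^m-1}$ and so forces the complementary lower bound $\mu\ge m(p-1)$, whereas non-degeneracy over $L$ (the condition $t\ge 2$) is what furnishes an index $b$ realizing the small value $m(p-1)$, preventing the collapse to the degenerate value $2m(p-1)$. Converting back through $\val=\val_{\mathfrak p}/(p-1)$ then gives $\min_{u\in L^*}\val(W_{L,d}(u))=m=[K:\Fp]$. A secondary technical care is checking that the chosen Stickelberger normalization is compatible with the sign conventions in $g(\omega^{-a})g(\omega^{a/d})$, but since both bounds are obtained by complementation the overall minimum is insensitive to these signs.
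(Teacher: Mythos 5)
First, a note on the comparison: the paper itself offers no proof of this proposition --- it is quoted verbatim from Corollary 4.4 of Aubry--Katz--Langevin --- so the only benchmark is that source, whose Section 4 machinery (Stickelberger's congruence plus minimization of $p$-ary digit sums) is exactly what you are rebuilding. Your reduction of the minimal valuation to $\mu=\min_{1\le b\le q-2}\bigl(s(\langle -db\rangle)+s(b)\bigr)$, the attainment of $\mu$ via linear independence of the reduced Teichm\"uller characters, the normalization $\val_{\mathfrak p}=(p-1)\val$, the reduction to $d\equiv 1\pmod{p^m-1}$ with $t\in[2,p^m]$ encoding nondegeneracy over $L$, and the entire lower bound $\mu\ge m(p-1)$ (complementary residues modulo $p^m-1$ via folding, together with the observation that every nonzero multiple $k(p^m-1)=(k-1)p^m+(p^m-k)$ has digit sum exactly $m(p-1)$) are all correct.

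The gap is in the upper bound, in the residue class $t\equiv 1\pmod p$, which is nonempty whenever $m\ge 2$ (e.g.\ $t=p+1$; for $m=1$ the range $[2,p]$ avoids it, which may be why the problem is easy to miss). Your escape hatch fails: the only substitution preserving $d\equiv 1\pmod{p^m-1}$ is $d\mapsto p^m d$, which sends $t$ to $\langle 1-t\rangle_{p^m+1}=p^m+2-t\equiv 2-t\pmod p$, so $t\equiv 1$ forces $t'\equiv 1$ as well --- the bad class is stable under the one symmetry you have. (You appear to have reduced $1-t$ modulo $p$ rather than taking the actual integer representative modulo $p^m+1$.) Worse, no nearby choice of $b$ repairs it: for $tb\le p^m$ one computes $\langle -db\rangle=(p^m-tb)p^m+\bigl((t-1)b-1\bigr)$, and since $s_m(p^m-x)=m(p-1)-s_m(x-1)$ exactly, the total is $m(p-1)+\bigl(s_m((t-1)b-1)+s_m(b)-s_m(tb-1)\bigr)$, which equals $m(p-1)$ iff adding $b$ to $(t-1)b-1$ produces no carry; when $t\equiv 1\pmod p$, stripping factors of $p$ from $b$ shows the units digit of the relevant summand is always $p-1$, so a carry always occurs. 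Concretely: $p=5$, $m=2$, $q=625$, $t=6$, $d=145$ (coprime to $624$, $\equiv 1\bmod 24$, not a power of $5$ modulo $624$). Then $b=1$ gives $s(479)+s(1)=11+1=12$, and the alternate representative $d'=505$ (with $t'=21\equiv 1\bmod 5$) gives $s(119)+1=12$ again, yet the true minimum is $m(p-1)=8$, attained at $b=6$: $\langle -145\cdot 6\rangle=378=(3,0,0,3)_5$, so $s(378)+s(6)=6+2=8$. Note $tb=36>p^m$: the actual witness lives in the wrap-around regime modulo $p^m+1$ that your construction never explores. So your strategy and lower bound are sound, but the existence of a minimizing $b$ when $t\equiv 1\pmod p$ --- precisely where nondegeneracy over $L$ must do real work --- is unproven as written, and it is not a bookkeeping detail but the hardest case of the digit-sum minimization.
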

We now recall and prove Theorem \ref{Dorothy}.
\begin{theorem}
Let $F$ be a field of characteristic $p$ and order $q=p^n$, with $n$ a power of $2$.
Let $d$ be a positive integer with $\gcd(d,q-1)=1$ such that $\Wfd$ is three-valued.
Then there is some $u \in \Fu$ such that $\val(\Wfdu) \leq n/2$.
\end{theorem}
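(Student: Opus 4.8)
The plan is to exploit the fact that $d$ must be degenerate over the prime field $\Fp$ while remaining nondegenerate over $F$, and then to locate the precise subfield at which degeneracy first breaks down. Since $\Wfd$ is three-valued, Theorem \ref{Imogene} gives $d \equiv 1 \pmod{p-1}$; as every power of $p$ is congruent to $1$ modulo $p-1$, this is exactly the statement that $d$ is degenerate over $\Fp$. On the other hand, three-valuedness forces $d$ to be nondegenerate over $F$ itself.

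Next I would set up the tower of intermediate fields. Because $n = 2^k$ is a power of $2$, the subfields of $F$ form a single chain $\Fp = E_0 \subset E_1 \subset \cdots \subset E_k = F$ with $[E_i : \Fp] = 2^i$, each $E_{i+1}$ being the quadratic extension of $E_i$. One checks that $d$ is coprime to $\card{E_i^*} = p^{2^i}-1$ for every $i$, since $p^{2^i}-1$ divides $p^n-1=q-1$ (as $2^i \mid 2^k = n$) and $\gcd(d,q-1)=1$. Thus the hypotheses of Propositions \ref{George} and \ref{Henry} are available at every stage of the tower.

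The heart of the argument is to find the transition. Degeneracy holds at $E_0$ and fails at $E_k$, so there is a largest index $m$ with $0 \leq m \leq k-1$ such that $d$ is degenerate over $E_m$ but not over $E_{m+1}$. Since $E_{m+1}$ is the quadratic extension of $E_m$, Proposition \ref{Henry} (with $K = E_m$ and $L = E_{m+1}$) evaluates
\[
\min_{u \in E_{m+1}^*} \val(W_{E_{m+1},d}(u)) = [E_m : \Fp] = 2^m.
\]
Proposition \ref{George} (with $K = E_{m+1}$ and $L = F$) then bounds
\[
\min_{u \in \Fu} \val(\Wfdu) \leq [F : E_{m+1}] \cdot 2^m = 2^{k-m-1} \cdot 2^m = 2^{k-1} = \frac{n}{2},
\]
which is the desired conclusion; in the boundary case $m = k-1$ the application of Proposition \ref{George} is trivial and Proposition \ref{Henry} already delivers equality.

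The main obstacle is conceptual rather than computational: recognizing that Theorem \ref{Imogene} secures degeneracy over the prime field, and that the power-of-$2$ hypothesis on $n$ is precisely what guarantees a chain of quadratic extensions through which degeneracy must break at a single quadratic step. That single step is what lets Proposition \ref{Henry} supply a sharp valuation and Proposition \ref{George} transport it up to $F$ with the correct multiplicative factor. Once the transition subfield $E_m$ is identified, the remainder is routine bookkeeping with the multiplicativity of the degree.
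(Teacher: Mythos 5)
Your proposal is correct and follows essentially the same route as the paper's own proof: degeneracy over $\Fp$ via Theorem \ref{Imogene}, nondegeneracy over $F$ from three-valuedness, locating the quadratic step in the tower where degeneracy breaks, and then combining Propositions \ref{Henry} and \ref{George}. Your explicit verification that $\gcd(d, p^{2^i}-1)=1$ at each stage is a detail the paper leaves implicit, but the argument is the same.
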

\begin{proof}
By Theorem \ref{Imogene}, $d$ is degenerate over $\Fp$.
But $d$ is not degenerate over $F$ since $\Wfd$ is three-valued (cf.~\eqref{Ronald}).
Proceeding by successive quadratic extensions from $\Fp$ to $F$, there must be subfields, say $K$ and $L$, of $F$ with $[L:K]=2$ and $d$ degenerate over $K$ but not $L$.
Then by Propositions \ref{George} and \ref{Henry}, we have
\begin{align*}
\min_{u \in \Fu} \val(\Wfdu) 
& \leq [F:L] \cdot \min_{u \in L^*} \val(W_{L,d}(u)) \\
& = [F:L] [K:\Fp],
\end{align*}
so that there is some $u \in \Fu$ with $\val(\Wfdu) \leq [F:\Fp]/[L:K] = n/2$.
\end{proof}

\section*{Acknowledgement}

The author was supported in part by a Research, Scholarship, and Creative Activity Award from California State University, Northridge.

\end{document}